\title[]{Strong convergence theorems for strongly relatively
nonexpansive sequences and applications}
\author{Koji~Aoyama}
\address[Koji Aoyama]{%
Department of Economics, Chiba University, 
Yayoi-cho, Inage-ku, Chi\-ba-shi, Chiba 263-8522, Japan}
\email{aoyama@le.chiba-u.ac.jp}
\author{Yasunori~Kimura}
\address[Yasunori~Kimura]{%
Department of Information Science, Toho University, Miyama, Funabashi-shi,
Chiba 274-8510, Japan}
\email{yasunori@is.sci.toho-u.ac.jp}
\author{Fumiaki~Kohsaka}
\address[Fumiaki Kohsaka]{%
Department of Computer Science and Intelligent Systems, 
Oita University, Dannoharu, Oita-shi, Oita 870-1192, Japan}
\email{f-kohsaka@oita-u.ac.jp}
\keywords{Strongly relatively nonexpansive sequence, 
common fixed point, strong convergence theorem}
\subjclass[2010]{47H09, 47H10, 41A65}
\date{\today}
\def\N{\mathbb N}
\providecommand{\norm}[1]{\left\lVert#1\right\rVert}
\providecommand{\ip}[2]{\left\langle #1, #2 \right\rangle}
\providecommand{\dom}{\operatorname{dom}}
\numberwithin{equation}{section}
\theoremstyle{plain}
\newtheorem{theorem}{Theorem}[section]
\newtheorem{lemma}[theorem]{Lemma}
\theoremstyle{definition}
\newtheorem{example}[theorem]{Example}
\theoremstyle{remark}
\newtheorem{remark}[theorem]{Remark}
\begin{document}

\begin{abstract}
 The aim of this paper is to establish 
 strong convergence theorems for a strongly relatively nonexpansive
 sequence in a smooth and uniformly convex Banach space. 
 Then we employ our results to approximate solutions of 
 the zero point problem for a maximal monotone operator 
 and the fixed point problem for a relatively nonexpansive mapping. 
\end{abstract}

\maketitle

\section{Introduction}
Let $E$ be a smooth and uniformly convex Banach space, 
$E^*$ the dual of $E$, $A \subset E\times E^*$ a maximal monotone
operator with a zero point, 
and $\{r_n\}$ a sequence of positive real numbers. 
Assume that $\{x_n\}$ is a sequence defined as follows: $x_1 \in E$ and 
\[
  x_{n+1} = J^{-1} \left(
  \frac1n J x + \Bigl( 1 - \frac1n \Bigr) J (J+ r_n A)^{-1}J x_n \right)
\]
for $n \in \N$, 
where $J$ and $J^{-1}$ are the duality mappings of $E$ and $E^*$, 
respectively. 
It is known~\cite{MR2058504} that 
if $r_n \to \infty$, then $\{x_n\}$ converges strongly to some zero
point of $A$. 
However, we have not known whether 
$\{x_n\}$ converges strongly or not 
without the assumption that $r_n \to \infty$. 
In \S\ref{s:app} we present an affirmative answer to this problem; 
see Theorem~\ref{th:mm} and Remark~\ref{r:ak_kt}. 

Furthermore, more general results are proved; see Theorem~\ref{th:tauSRNS}. 
This is a strong convergence theorem for a strongly relatively
nonexpansive sequence introduced in~\cite{MR2529497}. 
In the proofs of Theorem~\ref{th:tauSRNS}, we use modifications of ideas
developed in~\cites{MR2466027,MR2680036}.
In particular, Lemma~\ref{lm:mainge} due to Maing\'e~\cite{MR2466027} is
a fundamental tool; see also Example~\ref{ex} and
Lemma~\ref{lm:neo_mainge}.

In \S\ref{s:app}, 
using Theorem~\ref{th:tauSRNS}, we also show Theorem~\ref{th:fpp}
which is a strong convergence theorem for a relatively nonexpansive
mapping in the sense of Matsushita and Takahashi~\cite{MR2058234}.

\section{Preliminaries}
Throughout the present paper, 
$E$ denotes a real Banach space with norm $\norm{\,\cdot\,}$, 
$E^*$ the dual of $E$, 
$\ip{x}{x^*}$ the value of $x^*\in E^*$ at $x\in E$, 
and $\N$ the set of positive integers. 
The norm of $E^*$ is also denoted by $\norm{\,\cdot\,}$. 
Strong convergence of a sequence $\{x_n\}$ in $E$ to $x\in E$ is denoted
by $x_n \to x$ and weak convergence by $x_n \rightharpoonup x$. 
The (normalized) duality mapping of $E$ is denoted by $J$, that is, 
it is a set-valued mapping of $E$ into $E^*$ defined by 
\[
Jx = \{ x^*\in E^* : \ip{x}{x^*} = \norm{x}^2 = \norm{x^*}^2 \}
\]
for $x\in E$. 

Let $S_E$ denote the unit sphere of $E$, that is, 
$S_E =\{ x\in E : \norm{x}=1 \}$.
The norm $\norm{\,\cdot\,}$ of $E$ is said to be G\^{a}teaux
differentiable if the limit
\begin{equation}
\lim_{t \to 0} \frac{\norm{x+ty} -\norm{x}}t 
 \label{eqn:norm-diff}
\end{equation}
exists for all $x,y \in S_E$.
In this case $E$ is said to be smooth
and it is known that the duality mapping $J$ of $E$ is single-valued. 
The norm of $E$ is said to be uniformly G\^{a}teaux differentiable
if for each $y\in S_E$ the limit \eqref{eqn:norm-diff} is attained
uniformly for $x \in S_E$.
A Banach space $E$ is said to be uniformly smooth
if the limit~\eqref{eqn:norm-diff} is attained uniformly for $x,y \in S_E$. 
In this case it is known that $J$ is uniformly norm-to-norm continuous 
on each bounded subset of $E$; see \cite{MR1864294} for more details.

A Banach space $E$ is said to be strictly convex
if $x,y\in S_E$ and $x \ne y$ imply $\norm{x+y}<2$. 
A Banach space $E$ is said to be uniformly convex 
if for any $\epsilon >0$ there exists $\delta >0$ such that 
$x,y\in S_E$ and $\norm{x-y}\geq \epsilon$ imply 
$\norm{x+y}/2 \leq 1-\delta$.
It is known that 
$E$ is reflexive and strictly convex if $E$ is uniformly convex; 
$E$ is uniformly smooth if and only if $E^*$ is uniformly convex;
see~\cite{MR1864294} for more details. 

In the rest of this section, unless otherwise stated, 
we assume that $E$ is a smooth, strictly convex, and reflexive Banach
space. 
In this case it is known that 
the duality mapping $J$ of $E$ is single-valued and bijective,
and $J^{-1}$ is the duality mapping of $E^*$. 

We deal with a real-valued function $\phi$ on $E\times E$ defined by
\[
 \phi(x,y)=\norm{x}^2 - 2\ip{x}{Jy} + \norm{y}^2
\]
for $x,y\in E$; see \cites{MR1386667,MR1972223}. 
From the definition of $\phi$, it is clear that
\begin{equation}\label{eqn:phi-lower}
 (\norm{x}- \norm{y})^2 \leq \phi(x,y) 
\end{equation}
for all $x,y\in E$. 
Since $\norm{\,\cdot\,}^2$ is convex, 
\begin{equation}\label{eq:J-convex}
 \phi \Bigl( w,J^{-1} \bigl( \lambda Jx + (1-\lambda)Jy \bigr) \Bigr)
  \leq \lambda \phi(w,x) + (1-\lambda)\phi(w,y)
\end{equation}
holds for all $x,y,w\in E$ and $\lambda \in [0,1]$. 
It is known that 
\begin{equation}\label{eq:KT2004}
 \phi(x,J^{-1}x^*) \leq \phi(x, J^{-1}(x^* - y^*)) 
  + 2 \ip{J^{-1}x^* -x}{y^*}
\end{equation}
holds for all $x\in E$ and $x^*, y^*\in E^*$; 
see~\cite{MR2058504}*{Lemma~3.2}. 

\begin{lemma}[\cite{MR1972223}*{Proposition 2}]\label{lm:kamimura2002}
 Let $E$ be a smooth and uniformly convex Banach space. 
 Let $\{x_n\}$ and $\{y_n\}$ be bounded sequences in $E$. 
 If $\phi(x_n,y_n)\to 0$, then $x_n - y_n \to 0$. 
\end{lemma}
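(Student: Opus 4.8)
The plan is to argue by contradiction. Suppose the conclusion fails; then there exist $\epsilon>0$ and subsequences which, after relabelling, I take to be $\{x_n\}$ and $\{y_n\}$ themselves, with $\norm{x_n-y_n}\ge\epsilon$ for every $n$. Since both sequences are bounded, I pass to a further subsequence along which $\norm{x_n}$ and $\norm{y_n}$ converge. By \eqref{eqn:phi-lower} and the hypothesis $\phi(x_n,y_n)\to0$ we have $\norm{x_n}-\norm{y_n}\to0$, so the two limits coincide; call the common limit $c\ge0$. If $c=0$, then $\norm{x_n-y_n}\le\norm{x_n}+\norm{y_n}\to0$, which already contradicts $\norm{x_n-y_n}\ge\epsilon$, so it remains only to rule out $c>0$.

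Assume $c>0$. For all large $n$ the norms $\norm{x_n}$ and $\norm{y_n}$ are positive, and I set $u_n=x_n/\norm{x_n}$ and $v_n=y_n/\norm{y_n}$, which lie in $S_E$. Writing $x_n-y_n=\norm{x_n}(u_n-v_n)+(\norm{x_n}-\norm{y_n})v_n$ and using $\norm{x_n}-\norm{y_n}\to0$ together with the boundedness of $\{\norm{x_n}\}$, one checks that $\norm{u_n-v_n}$ is bounded below by a positive constant for all large $n$; hence by uniform convexity there is $\delta>0$ with $\norm{u_n+v_n}\le2(1-\delta)$ for all large $n$. On the other hand, by the positive homogeneity of $J$ (so that $Jy_n=\norm{y_n}Jv_n$),
\[
 \phi(x_n,y_n)=\norm{x_n}^2-2\norm{x_n}\norm{y_n}\ip{u_n}{Jv_n}+\norm{y_n}^2 .
\]
Letting $n\to\infty$ and using $\norm{x_n},\norm{y_n}\to c>0$ and $\phi(x_n,y_n)\to0$ forces $\ip{u_n}{Jv_n}\to1$. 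Finally, since $\ip{u_n+v_n}{Jv_n}=\ip{u_n}{Jv_n}+1\to2$ while $\ip{u_n+v_n}{Jv_n}\le\norm{u_n+v_n}\le2(1-\delta)$ for all large $n$, we obtain a contradiction. Therefore $c>0$ is impossible, and $x_n-y_n\to0$.

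The routine points — extracting convergent subsequences of the norm sequences, the homogeneity identity $Jy_n=\norm{y_n}Jv_n$, and the elementary estimate keeping $\norm{u_n-v_n}$ away from $0$ — will be straightforward. The step I expect to be the crux is the positive-limit case: converting the quantitative hypothesis ``$\phi(x_n,y_n)\to0$ with norms bounded away from $0$'' into the unit-sphere statement ``$\ip{u_n}{Jv_n}\to1$'', and then playing this against the uniform convexity inequality through the test functional $Jv_n$. Everything else is bookkeeping with subsequences, and one should note that the same argument shows the (a priori) subsequence chosen at the start was immaterial, since the contradiction is reached from $\norm{x_n-y_n}\ge\epsilon$ alone.
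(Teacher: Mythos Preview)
The paper does not contain a proof of this lemma: it is quoted verbatim from Kamimura--Takahashi~\cite{MR1972223}*{Proposition~2} and used as a black box. So there is no in-paper proof to compare your argument against.

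That said, your argument is correct. The contradiction strategy is sound: after passing to a subsequence with $\norm{x_n-y_n}\ge\epsilon$ and $\norm{x_n},\norm{y_n}\to c$, the case $c=0$ is immediate, and in the case $c>0$ your normalization to $u_n,v_n\in S_E$ works. The homogeneity identity $Jy_n=\norm{y_n}\,Jv_n$ is valid for the normalized duality mapping, the lower bound on $\norm{u_n-v_n}$ follows exactly from the decomposition $x_n-y_n=\norm{x_n}(u_n-v_n)+(\norm{x_n}-\norm{y_n})v_n$, and the contradiction via $\ip{u_n+v_n}{Jv_n}\to2$ against $\norm{u_n+v_n}\le2(1-\delta)$ is clean (using $\norm{Jv_n}=1$). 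The only cosmetic point: you might make explicit that $\ip{u_n}{Jv_n}$ is computed as $\bigl(\norm{x_n}^2+\norm{y_n}^2-\phi(x_n,y_n)\bigr)/(2\norm{x_n}\,\norm{y_n})$, which visibly tends to $1$, rather than phrasing it as ``letting $n\to\infty$ forces'', since a limit of $\ip{u_n}{Jv_n}$ is not assumed a priori.

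For comparison, the proof in~\cite{MR1972223} works with the modulus of convexity $\delta_E$ directly, deriving an inequality of the type $\phi(x_n,y_n)\ge\text{(positive quantity)}\cdot\delta_E\bigl(\norm{x_n-y_n}/\text{(bound)}\bigr)$; your approach is essentially the same idea unwound on the unit sphere, using the duality pairing as a test functional instead of an explicit modulus-of-convexity estimate. Both routes hinge on the same geometric fact, and yours is perfectly acceptable.
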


Let $\{x_n\}$ and $\{y_n\}$ be bounded sequences in $E$. 
Then it is obvious from the definition of $\phi$ that 
$\phi(x_n,y_n)\to 0$ if $x_n - y_n\to 0$. 
From this fact and Lemma~\ref{lm:kamimura2002}, we deduce the following: 
If $E$ is a uniformly convex and uniformly smooth Banach space $E$, then
\begin{equation}\label{eq:equiv_x_n-y_n}
 x_n - y_n \to 0 \Leftrightarrow Jx_n - Jy_n \to 0 
  \Leftrightarrow \phi(x_n, y_n) \to 0.
\end{equation}

In the rest of this section, we assume that $C$ is a nonempty closed
convex subset of $E$. 

Let $T\colon C\to E$ be a mapping. 
The set of fixed points of $T$ is denoted by $F(T)$. 
A point $p\in C$ is said to be an asymptotic fixed point of $T$ 
\cites{MR1386686,MR1402641} if there exists a sequence $\{x_n\}$ in $C$ 
such that $x_n \rightharpoonup p$ and $x_n -T x_n\to 0$. 
The set of asymptotic fixed points of $T$ is denoted by
$\hat{F}(T)$. 
A mapping $T$ is said to be of type~(r)
if $F(T) \ne \emptyset$ and $\phi(p,Tx)\leq \phi(p,x)$ 
for all $x\in C$ and $p\in F(T)$; 
$T$ is said to be relatively nonexpansive~\cites{MR2058234,MR2142300}
if $T$ is of type~(r) and $F(T)= \hat{F}(T)$. 
We know that if $T\colon C\to E$ is of type~(r), then $F(T)$ is closed
and convex; see~\cite{MR2142300}*{Proposition~2.4}. 

It is known that, for each $x \in E$, 
there exists a unique point $x_0 \in C$ such that 
\[
 \phi(x_0,x)=\min\{\phi(y,x): y\in C\}. 
\]
Such a point $x_0$ is denoted by $Q_C (x)$ and $Q_C$ is called the
generalized projection of $E$ onto $C$; see \cites{MR1386667,MR1972223}. 
It is known that 
\begin{equation}\label{eqn:generalized-projection}
 \ip{z - Q_C (x)}{Jx - JQ_C(x)} \le 0 
\end{equation}
or equivalently
\begin{equation}\label{eq:gp_by_phi}
 \phi(z,Q_C(x)) + \phi(Q_C(x),x) \le \phi(z,x)
\end{equation}
holds for all $x\in E$ and $z\in C$. 
It is obvious from~\eqref{eq:gp_by_phi} that 
the generalized projection $Q_C$ is of type~(r). 

Let $A$ be a set-valued mapping of $E$ into $E^*$, 
which is denoted by $A\subset E\times E^*$. 
The effective domain of $A$ is denoted by $\dom(A)$ and the range of $A$
by $R(A)$, that is, $\dom(A)=\{x\in E: Ax \ne \emptyset\}$ and 
$R(A)=\bigcup_{x\in \dom(A)}Ax$.
A set-valued mapping $A\subset E\times E^*$ is said to be a monotone
operator if $\ip{x-y}{x^* - y^*}\geq 0$
for all $(x,x^*), (y,y^*)\in A$. 
A monotone operator $A\subset E\times E^*$ is said to be maximal 
if $A=A'$ whenever $A'\subset E\times E^*$ is a monotone operator such
that $A\subset A'$.
It is known that if $A$ is a maximal monotone operator, then $A^{-1} 0$
is closed and convex, where $A^{-1}0 =\{x\in E: Ax \ni 0\}$. 

Let $A\subset E\times E^*$ be a maximal monotone operator and $r>0$. 
Then it is known that $R(J + rA) = E^*$; see~\cites{MR0282272}. 
Thus a single-valued mapping $L_r = (J+rA)^{-1}J$ of $E$ onto $\dom(A)$ 
is well defined and is called the resolvent of $A$. 
It is also known that $F(L_r) = A^{-1}0$ and 
\begin{equation}
 \label{eqn:resolvent}
 \phi(u, L_r x) + \phi(L_r x, x) \leq \phi(u,x) 
\end{equation}
for all $x \in E$ and $u\in F(L_r)$; see~\cites{MR2058504,MR2112848}.
It is obvious from~\eqref{eqn:resolvent} that 
the resolvent $L_r$ of $A$ is of type~(r) for all $r>0$ 
whenever $A^{-1}0$ is nonempty.

The following lemma is well known; see~\cites{MR1911872,MR2338104}. 

\begin{lemma}\label{lm:seq}
 Let $\{ \xi_n \}$ be a sequence of nonnegative real numbers, 
 $\{ \gamma_n \}$ a sequence of real numbers, 
 and $\{ \alpha_n \}$ a sequence in $[0,1]$. 
 Suppose that 
 $\xi_{n+1} \leq (1- \alpha_n) \xi_n + \alpha_n \gamma_n$
 for every $n\in\N$, $\limsup_{n\to\infty} \gamma_n \leq 0 $, 
 and $\sum_{n=1}^\infty \alpha_n = \infty$. Then $\xi_n \to 0$. 
\end{lemma}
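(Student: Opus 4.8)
The plan is to reduce the statement to the elementary fact that the infinite product $\prod_k(1-\alpha_k)$ vanishes when $\sum_k\alpha_k$ diverges. First I would fix an arbitrary $\epsilon>0$ and use the hypothesis $\limsup_{n\to\infty}\gamma_n\le 0$ to choose $N\in\N$ such that $\gamma_n\le\epsilon$ for all $n\ge N$. For such $n$ the recursion gives $\xi_{n+1}\le(1-\alpha_n)\xi_n+\alpha_n\epsilon$.

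Next I would unfold this recursion by induction on $m\ge N$ to obtain
\[
 \xi_m \le \Bigl(\prod_{k=N}^{m-1}(1-\alpha_k)\Bigr)\xi_N
  + \epsilon\Bigl(1 - \prod_{k=N}^{m-1}(1-\alpha_k)\Bigr),
\]
the base case $m=N$ being the trivial equality (empty product equal to $1$) and the inductive step a one-line computation using $(1-\alpha_m)(1-p)+\alpha_m = 1-(1-\alpha_m)p$. Writing $p_m=\prod_{k=N}^{m-1}(1-\alpha_k)\in[0,1]$, this reads $\xi_m\le p_m\xi_N+\epsilon(1-p_m)\le p_m\xi_N+\epsilon$.

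Then I would show $p_m\to 0$ as $m\to\infty$: since each $\alpha_k\in[0,1]$ we have $1-\alpha_k\le e^{-\alpha_k}$, hence $0\le p_m\le\exp\bigl(-\sum_{k=N}^{m-1}\alpha_k\bigr)$, and the right-hand side tends to $0$ because $\sum_{k=1}^\infty\alpha_k=\infty$. Consequently $\limsup_{m\to\infty}\xi_m\le\epsilon$. Since $\epsilon>0$ was arbitrary and $\xi_m\ge 0$, this forces $\xi_m\to 0$.

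The argument is almost entirely routine; the only point requiring care is that $\{\gamma_n\}$ is not assumed to be bounded below, so one cannot simply discard the term $\alpha_n\gamma_n$ — the $\limsup$ hypothesis must be used to trap $\gamma_n$ from above by $\epsilon$ \emph{before} unfolding the recursion. The passage from $\sum\alpha_k=\infty$ to $p_m\to 0$ via the inequality $1-t\le e^{-t}$ is the other ingredient, but it is standard, so I do not expect a genuine obstacle here.
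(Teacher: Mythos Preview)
Your argument is correct and is the standard route to this classical lemma: trap $\gamma_n$ above by $\epsilon$ past some index $N$, telescope the recursion into $\xi_m\le p_m\xi_N+\epsilon(1-p_m)$ with $p_m=\prod_{k=N}^{m-1}(1-\alpha_k)$, and kill $p_m$ via $1-t\le e^{-t}$ together with $\sum\alpha_k=\infty$. The induction and the bounds are all in order, and you are right to stress that the $\limsup$ hypothesis must be invoked before unfolding, since $\{\gamma_n\}$ need not be bounded below.

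There is nothing to compare against in the paper itself: the lemma is quoted as well known, with references to Xu~\cite{MR1911872} and~\cite{MR2338104}, and no proof is supplied. Your write-up is exactly the kind of self-contained verification one would give for it.
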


\section{Eventually increasing functions and strongly relatively
 nonexpansive sequences}

In this section, we provide some needed lemmas about an eventually
increasing function and a strongly relatively nonexpansive sequence. 

A function $\tau \colon \N \to \N$ is said to be \emph{eventually
increasing} if $\lim_{n\to\infty}\tau(n) = \infty$ and 
$\tau(n) \leq \tau(n+1)$ for all $n\in \N$. 
By definition, we easily obtain the following: 

\begin{lemma}\label{2zero}
 Let $\tau \colon \N \to \N$ be an eventually increasing function and
 $\{\alpha_n\}$ a sequence of real numbers such that $\alpha_n \to 0$. 
 Then $\alpha_{\tau(n)}\to 0$. 
\end{lemma}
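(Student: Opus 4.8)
The plan is to prove Lemma~\ref{2zero} directly from the definitions, since the statement is essentially a routine consequence of the fact that $\tau$ tends to infinity. Fix $\varepsilon > 0$. Because $\alpha_n \to 0$, there is an index $N \in \N$ such that $\abs{\alpha_m} < \varepsilon$ for all $m \ge N$. Since $\tau$ is eventually increasing, $\lim_{n\to\infty}\tau(n) = \infty$, so there is an index $M \in \N$ such that $\tau(n) \ge N$ for all $n \ge M$. Combining these two facts, for every $n \ge M$ we have $\tau(n) \ge N$ and hence $\abs{\alpha_{\tau(n)}} < \varepsilon$. As $\varepsilon > 0$ was arbitrary, this shows $\alpha_{\tau(n)} \to 0$.

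I expect there to be no genuine obstacle here; the only point worth a moment's care is that one uses only the divergence $\tau(n)\to\infty$ and not the monotonicity of $\tau$ — the monotonicity is part of the definition of ``eventually increasing'' but is not needed for this particular conclusion. It may be worth remarking in the write-up that the hypothesis ``eventually increasing'' could be weakened to ``$\tau(n)\to\infty$'' for this lemma, though keeping the stronger hypothesis is harmless and keeps the terminology uniform with the rest of the section. If one prefers a slicker phrasing, one can also argue by contradiction: if $\alpha_{\tau(n)} \not\to 0$, then some subsequence $\alpha_{\tau(n_k)}$ stays bounded away from $0$, but $\tau(n_k) \to \infty$ forces $\alpha_{\tau(n_k)} \to 0$, a contradiction. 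Either formulation is short enough that the full proof can simply be written out rather than merely sketched.
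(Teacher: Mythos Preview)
Your proof is correct; the paper itself gives no explicit argument for this lemma, merely noting before the statement that it follows easily ``by definition,'' and your $\varepsilon$--$N$ argument is exactly the routine verification the authors had in mind. Your side remark that only the condition $\tau(n)\to\infty$ (and not the monotonicity) is actually used is also accurate.
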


We need the following lemma: 

\begin{lemma}[Maing{\'e}~\cite{MR2466027}*{Lemma 3.1}]\label{lm:mainge}
 Let $\{\xi_n\}$ be a sequence of real numbers. 
 Suppose that there exists a subsequence $\{\xi_{n_i}\}$ of $\{\xi_n\}$ 
 such that $\xi_{n_i} < \xi_{n_i + 1}$ for all $i\in \N$. 
 Then there exist $N\in \N$ and a function $\tau \colon \N \to \N$ such
 that $\tau(n) \leq \tau(n+1)$, $\xi_{\tau(n)} \le \xi_{\tau(n)+1}$, and 
 $\xi_n \le \xi_{\tau(n)+1}$ for all $n \geq N$
 and $\lim_{n\to \infty} \tau(n) = \infty$. 
\end{lemma}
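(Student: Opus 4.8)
The plan is to construct $\tau$ explicitly from the hypothesis and then verify each of the four asserted properties in turn. Since there is a subsequence $\{\xi_{n_i}\}$ with $\xi_{n_i} < \xi_{n_i+1}$, the set $T_n = \{\, k \in \N : k \le n,\ \xi_k < \xi_{k+1} \,\}$ is nonempty for every sufficiently large $n$; fix $N \in \N$ so that $n_1 \le N$, which guarantees $T_n \ne \emptyset$ for all $n \ge N$. For such $n$ define $\tau(n) = \max T_n$, i.e.\ the largest index $k \le n$ at which $\xi$ strictly increases from $k$ to $k+1$. (For $n < N$ one may set $\tau(n) = \tau(N)$, or leave $\tau$ defined only on $\{n : n \ge N\}$ as the statement permits.) The property $\xi_{\tau(n)} \le \xi_{\tau(n)+1}$ is then immediate, since $\tau(n) \in T_n$ means precisely $\xi_{\tau(n)} < \xi_{\tau(n)+1}$.

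Next I would check monotonicity $\tau(n) \le \tau(n+1)$. This follows because $T_n \subseteq T_{n+1}$, so the maximum cannot decrease. For $\lim_{n\to\infty}\tau(n) = \infty$: the sequence $\{n_i\}$ is strictly increasing and each $n_i$ lies in $T_{n}$ whenever $n \ge n_i$, hence $\tau(n) \ge n_i$ for all $n \ge n_i$; letting $i \to \infty$ gives $\tau(n) \to \infty$.

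The one genuinely substantive point is the inequality $\xi_n \le \xi_{\tau(n)+1}$ for $n \ge N$. If $n = \tau(n)$ this is exactly $\xi_n \le \xi_{n+1}$, which holds since $n \in T_n$. If $\tau(n) < n$, then by maximality of $\tau(n)$ no index $k$ with $\tau(n) < k \le n$ satisfies $\xi_k < \xi_{k+1}$; that is, $\xi_{k+1} \le \xi_k$ for every such $k$, so the finite chain $\xi_{\tau(n)+1} \ge \xi_{\tau(n)+2} \ge \cdots \ge \xi_n$ is nonincreasing, whence $\xi_n \le \xi_{\tau(n)+1}$. The main obstacle is thus essentially bookkeeping: one must be careful that the "no strict increase after $\tau(n)$" reasoning is applied on the correct index range $\tau(n)+1 \le k \le n$ and that the edge case $\tau(n) = n$ is handled separately. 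With these pieces assembled, all four conclusions hold for every $n \ge N$, completing the proof.
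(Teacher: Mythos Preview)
Your proof is correct. Note, however, that the paper does not actually supply its own proof of this lemma: it is stated with a citation to Maing\'e~\cite{MR2466027}*{Lemma~3.1} and used as a black box. Your construction $\tau(n) = \max\{k \le n : \xi_k < \xi_{k+1}\}$ is precisely Maing\'e's original definition, and your verification of the four properties matches his argument, so there is nothing to compare beyond observing that you have reproduced the cited proof faithfully.
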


Under the assumptions of Lemma~\ref{lm:mainge}, we can not choose a
strictly increasing function $\tau$; see the following example: 

\begin{example}\label{ex}
 Let $\{ \xi_n \}$ be a sequence of real numbers define by 
 \[
 \xi_n = \begin{cases}
	  0 & \text{if $n$ is odd;}\\ 
	  1/n & \text{if $n$ is even.}
	 \end{cases}
 \]
 Then the following hold:
 \begin{enumerate}
  \item There exists a subsequence $\{ \xi_{n_i} \}$ of $\{ \xi_n \}$
	such that $\xi_{n_i} < \xi_{n_{i}+1}$ for all $i\in \N$;
  \item there does not exist a subsequence $\{ \xi_{m_k} \}$ of 
	$\{ \xi_n \}$ such that  
	$\xi_{m_k} \leq \xi_{m_k +1}$ and
	$\xi_k \leq \xi_{m_k +1}$ for all $k\in \N$. 
 \end{enumerate}
\end{example}

\begin{proof}
 Define $n_i = 2i-1$ for each $i\in\N$. Then it is clear that 
 \[
 \xi_{n_i} = \xi_{2i-1} = 0 < \frac1{2i} = \xi_{2i} = \xi_{n_i +1}
 \]
 for every $i\in\N$. Thus (1) holds. 

 Let $\{ \xi_{m_k} \}$ be a subsequence of $\{ \xi_n \}$. 
 Suppose that $\xi_{m_k} \leq \xi_{m_k +1}$ for all $k \in \N$. 
 Then it is easy to check that $m_k$ is odd and $m_k+1$ is even 
 for every $k \in \N$. 
 We now assume that $\xi_k \leq \xi_{m_k +1}$ for $k \in \N$. 
 Then it follows that 
  \[
  \frac1k = \xi_k \leq \xi_{m_k +1} = \frac1{m_k +1}
  \]
 if $k$ is even. 
 This implies that $k \geq m_k +1 \geq k +1$, which is a contradiction. 
\end{proof}

Using Lemma~\ref{lm:mainge}, we obtain the following: 

\begin{lemma}\label{lm:neo_mainge}
 Let $\{\xi_n\}$ be a sequence of nonnegative real numbers which is not
 convergent. 
 Then there exist $N\in \N$ and an eventually increasing function 
 $\tau \colon \N \to \N$ such that
 $\xi_{\tau(n)} \le \xi_{\tau(n)+1}$ for all $n \in \N$ and 
 $\xi_n \le \xi_{\tau(n)+1}$ for all $n \geq N$. 
\end{lemma}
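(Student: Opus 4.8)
The plan is to reduce everything to Lemma~\ref{lm:mainge}. First I would check that the hypothesis of that lemma is satisfied. Since $\{\xi_n\}$ is a sequence of nonnegative reals that does not converge, there must be infinitely many indices $n$ with $\xi_n < \xi_{n+1}$: otherwise there would be an $M\in\N$ with $\xi_{n+1}\le\xi_n$ for every $n\ge M$, so that $\{\xi_n\}_{n\ge M}$ is nonincreasing and bounded below by $0$, hence convergent, forcing $\{\xi_n\}$ to converge, a contradiction. Enumerating these indices in increasing order yields a subsequence $\{\xi_{n_i}\}$ of $\{\xi_n\}$ with $\xi_{n_i}<\xi_{n_i+1}$ for all $i\in\N$.

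Next I would invoke Lemma~\ref{lm:mainge} to obtain $N\in\N$ and a function $\tau\colon\N\to\N$ such that $\tau(n)\le\tau(n+1)$, $\xi_{\tau(n)}\le\xi_{\tau(n)+1}$, and $\xi_n\le\xi_{\tau(n)+1}$ hold for all $n\ge N$, and $\lim_{n\to\infty}\tau(n)=\infty$. This $\tau$ already delivers almost everything we want; the only shortfall is that the conclusion of Lemma~\ref{lm:neo_mainge} asks for an \emph{eventually increasing} $\tau$ (i.e., nondecreasing on all of $\N$) with $\xi_{\tau(n)}\le\xi_{\tau(n)+1}$ for \emph{every} $n\in\N$, whereas Lemma~\ref{lm:mainge} only guarantees these for $n\ge N$.

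To fix this, I would flatten $\tau$ on the initial segment by setting $\sigma(n)=\tau(\max\{n,N\})$, so $\sigma(n)=\tau(N)$ for $n\le N$ and $\sigma(n)=\tau(n)$ for $n\ge N$. Then $\sigma$ is nondecreasing (it is constant on $\{1,\dots,N\}$, agrees with the nondecreasing $\tau$ for $n\ge N$, and matches at the junction $n=N$), and $\lim_{n\to\infty}\sigma(n)=\lim_{n\to\infty}\tau(n)=\infty$, so $\sigma$ is eventually increasing. Moreover $\xi_{\sigma(n)}\le\xi_{\sigma(n)+1}$ for all $n\in\N$: for $n\le N$ this reads $\xi_{\tau(N)}\le\xi_{\tau(N)+1}$, which holds because $N\ge N$; for $n\ge N$ it is the conclusion of Lemma~\ref{lm:mainge}. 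Finally, $\xi_n\le\xi_{\sigma(n)+1}$ for all $n\ge N$ since $\sigma(n)=\tau(n)$ there. Taking $\sigma$ as the desired function completes the argument.

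I do not anticipate any real obstacle; the one point needing a moment of care is the bookkeeping in the last step — checking that the flattened function stays nondecreasing at $n=N$ and that the inequality $\xi_{\tau(n)}\le\xi_{\tau(n)+1}$ persists on $\{1,\dots,N\}$ — and this works precisely because all the flattened values on that segment equal $\tau(N)$, an index for which Lemma~\ref{lm:mainge} already supplies the inequality.
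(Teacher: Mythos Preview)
Your proof is correct and follows essentially the same route as the paper: verify the hypothesis of Lemma~\ref{lm:mainge} via the observation that a nonnegative, nonconvergent sequence must have infinitely many strict increases, apply that lemma, and then flatten the resulting function on the initial segment $\{1,\dots,N\}$ to the constant value at $N$. The only difference is notational (the paper calls the Maing\'e function $\sigma$ and the flattened one $\tau$, while you do the reverse) and that you spell out the junction check at $n=N$, which the paper leaves implicit.
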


\begin{proof}
 Since $\{\xi_n\}$ is not convergent, 
 for any $n\in \N$ there exists $m \in \N$ such that 
 $m \geq n$ and $\xi_{m} < \xi_{m+1}$, 
 and hence there exists a subsequence $\{\xi_{n_i}\}$ of $\{\xi_n\}$ 
 such that $\xi_{n_i} < \xi_{n_i + 1}$ for every $i\in \N$. 
 Lemma~\ref{lm:mainge} implies that there exist $N\in \N$ 
 and a function $\sigma \colon \N \to \N$ such that 
 $\sigma(n) \leq \sigma(n+1)$, $\xi_{\sigma(n)} \le \xi_{\sigma(n)+1}$,
 and $\xi_n \le \xi_{\sigma(n)+1}$ for every $n \geq N$
 and $\lim_{n\to \infty} \sigma(n) = \infty$.
 Let us define $\tau\colon \N \to \N$ by 
 $\tau(n) = \sigma(N)$ for $n\in \{1, 2, \dotsc, N \}$ 
 and $\tau(n) = \sigma(n)$ for $n > N$, 
 which completes the proof. 
\end{proof}

In the rest of this section, unless otherwise stated, 
we assume that $E$ is a smooth, strictly convex, and reflexive Banach
space and $C$ is a nonempty closed convex subset of $E$. 

Let $\{T_n\}$ be a sequence of mappings of $C$ into $E$ 
such that $F=\bigcap_{n=1}^\infty F(T_n)$ is nonempty. Then 
\begin{itemize}
 \item $\{T_n\}$ is said to be a \emph{strongly relatively nonexpansive
       sequence}~\cite{MR2529497} if each $T_n$ is of type~(r) and 
       $\phi(T_n x_n,x_n)\to 0$ whenever $\{x_n\}$ is a bounded sequence
       in $E$ and $\phi(p,x_n)-\phi(p,T_nx_n)\to 0$ for some point $p\in
       F$; 
 \item $\{T_n\}$ satisfies the \emph{condition~(Z)} if every weak
       cluster point of $\{x_n\}$ belongs to $F$ whenever $\{ x_n \}$ is
       a bounded sequence in $C$ such that $T_n x_n - x_n\to 0$. 
\end{itemize}
Let $A \subset E\times E^*$ be a maximal monotone operator with a zero
point and $\{r_n\}$ a sequence of positive real numbers. 
Then \eqref{eqn:resolvent} shows that the sequence $\{L_{r_n}\}$ of
resolvents of $A$ is a strongly relatively nonexpansive sequence;
see~\cite{MR2529497} for more details. 

In order to prove our main result in \S\ref{s:srns}, 
we need the following lemmas: 

\begin{lemma}\label{lm:tauSRNS}
 Let $\{T_n\}$ be a sequence of mappings of $C$ into $E$ such that 
 $F=\bigcap_{n=1}^\infty F(T_n)$ is nonempty, 
 $\tau\colon \N \to \N$ an eventually increasing function, 
 and $\{z_n\}$ a bounded sequence in $C$ such that 
 $\phi(p,z_n) - \phi(p, T_{\tau(n)} z_n) \to 0$ for some $p \in F$. 
 If $\{T_n\}$ is a strongly relatively nonexpansive sequence, 
 then $\phi(T_{\tau(n)} z_n, z_n) \to 0$. 
\end{lemma}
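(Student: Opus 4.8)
The plan is to argue by contradiction and reduce the claim to one application of the definition of a strongly relatively nonexpansive sequence. Suppose that $\phi(T_{\tau(n)}z_n,z_n)\not\to 0$. Then there are $\epsilon>0$ and a subsequence $\{n_k\}$ of $\N$ such that $\phi(T_{\tau(n_k)}z_{n_k},z_{n_k})\geq\epsilon$ for all $k\in\N$. Since $n_k\to\infty$ and $\tau$ is eventually increasing, $\tau(n_k)\to\infty$, so $\{\tau(n_k):k\in\N\}$ is an infinite subset of $\N$; hence, after passing to a further subsequence (which preserves $\phi(T_{\tau(n_k)}z_{n_k},z_{n_k})\geq\epsilon$), I may assume that the numbers $\tau(n_k)$, $k\in\N$, are pairwise distinct.

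The main step is to re-index the points $z_{n_k}$ into a single sequence in which the term indexed by $j$ is the one acted on by $T_j$. For $j\in\N$, define $x_j=z_{n_k}$ if $j=\tau(n_k)$ for some $k$ --- this $k$ is then unique, so $x_j$ is well defined --- and $x_j=p$ otherwise. Since $\{z_n\}$ is bounded and $p\in F\subseteq C$, the sequence $\{x_j\}$ is a bounded sequence in $C$. Moreover, if $j$ is not of the form $\tau(n_k)$, then $T_jx_j=T_jp=p$ because $p\in F\subseteq F(T_j)$, and so $\phi(p,x_j)-\phi(p,T_jx_j)=\phi(p,p)-\phi(p,p)=0$; while if $j=\tau(n_k)$, then $\phi(p,x_j)-\phi(p,T_jx_j)=\phi(p,z_{n_k})-\phi(p,T_{\tau(n_k)}z_{n_k})$. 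Using that $k\mapsto\tau(n_k)$ is injective together with the hypothesis $\phi(p,z_n)-\phi(p,T_{\tau(n)}z_n)\to 0$, a routine estimate then gives $\phi(p,x_j)-\phi(p,T_jx_j)\to 0$ as $j\to\infty$.

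Now $\{x_j\}$ meets the conditions in the definition of a strongly relatively nonexpansive sequence, so $\phi(T_jx_j,x_j)\to 0$. Taking $j=\tau(n_k)$ and recalling that $\tau(n_k)\to\infty$ gives $\phi(T_{\tau(n_k)}z_{n_k},z_{n_k})\to 0$, which contradicts $\phi(T_{\tau(n_k)}z_{n_k},z_{n_k})\geq\epsilon$ for all $k$. Therefore $\phi(T_{\tau(n)}z_n,z_n)\to 0$.

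The step I expect to require the most care is the re-indexing: since $\tau$ need not be injective --- consecutive indices $n$ can share the same value $\tau(n)$, and the fibres $\tau^{-1}(\{j\})$ can be arbitrarily long --- one cannot simply set $x_{\tau(n)}=z_n$ for every $n$. This is why one first passes to a subsequence along which $\tau$ is injective, and why the unused indices are filled with the common fixed point $p$: that choice makes $\phi(p,x_j)-\phi(p,T_jx_j)$ vanish off the subsequence, so that the convergence $\phi(p,x_j)-\phi(p,T_jx_j)\to 0$ holds for all large $j$, as the definition requires.
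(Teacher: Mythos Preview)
Your proof is correct and follows essentially the same route as the paper's. The paper likewise argues by contradiction, extracts a subsequence along which $\tau$ is strictly increasing, defines an auxiliary sequence equal to $z_{n_k}$ at the indices $\tau(n_k)$ and to $p$ elsewhere, and then applies the definition of a strongly relatively nonexpansive sequence to obtain the same contradiction; the only differences are notational (the paper writes the subsequence as a strictly increasing function $\sigma$ and sets $\mu=\tau\circ\sigma$).
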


\begin{proof}
 Suppose that $\phi(T_{\tau(n)} z_n, z_n) \nrightarrow 0$. Then there
 exist $\epsilon > 0$  and a strictly increasing function 
 $\sigma \colon \N \to \N$ such that
 $\tau \circ \sigma$ is also strictly increasing and
 \begin{equation}\label{geq_e}
  \phi(T_{\tau \circ \sigma (n)} z_{\sigma (n)}, z_{\sigma (n)}) 
   \geq \epsilon
 \end{equation}
 for all $n \in \N$. 
 Set $\mu = \tau \circ \sigma$ and $R(\mu) = \{\mu(n): n \in \N\}$.  
 Define a sequence $\{y_n\}$ in $C$ as follows:
 For each $n\in \N$, 
 \[
 y_n = \begin{cases}
	z_{\sigma \circ \mu^{-1}(n)} & \text{if $n \in R (\mu)$};\\
	p & \text{if $n \notin R (\mu)$}. 
       \end{cases}
 \]
 It is clear that $\{ y_n \}$ is bounded, 
 \[
 \phi(p,y_n) - \phi(p, T_n y_n) 
 = \phi(p,z_{\sigma \circ \mu^{-1}(n)}) - 
 \phi(p, T_{\tau (\sigma \circ \mu^{-1}(n))} z_{\sigma \circ\mu^{-1}(n)})
 \]
 for $n \in R (\mu)$, 
 and $\phi(p,y_n) - \phi(p, T_n y_n) = 0$ for $n \notin R (\mu)$. 
 Since $\sigma \circ \mu^{-1}$ is strictly increasing, 
 it follows that $\phi(p,y_n) - \phi(p, T_n y_n) \to 0$, 
 so $\phi(T_n y_n, y_n) \to 0$ 
 because $\{T_n\}$ is a strongly relatively nonexpansive sequence. 
 Therefore, noting that $y_{\mu(n)} = z_{\sigma(\mu^{-1}(\mu(n)))} =
 z_{\sigma(n)}$ and $\mu$ is strictly increasing, we have 
 \[
 \phi(T_{\tau\circ\sigma (n)} z_{\sigma(n)},  z_{\sigma(n)}) 
 = 
 \phi(T_{\mu (n)} y_{\mu (n)}, 
 y_{\mu (n)}) \to 0, 
 \]
 which contradicts to \eqref{geq_e}. 
\end{proof}


\begin{lemma}\label{lm:tauZ}
 Let $\{T_n\}$ be a sequence of mappings of $C$ into $E$ such that 
 $F=\bigcap_{n=1}^\infty F(T_n)$ is nonempty, 
 $\tau\colon \N \to \N$ an eventually increasing function, and $\{z_n\}$ a
 bounded sequence in $C$ such that $T_{\tau(n)} z_n - z_n \to 0$. 
 Suppose that $\{T_n\}$ satisfies the condition~(Z). 
 Then every weak cluster point of $\{z_n\}$ belongs to $F$. 
\end{lemma}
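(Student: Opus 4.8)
The plan is to imitate the proof of Lemma~\ref{lm:tauSRNS}: using the same interpolation device, I would convert the hypothesis $T_{\tau(n)}z_n - z_n \to 0$, which involves the composition with $\tau$, into a statement of the form $T_n y_n - y_n \to 0$ for a suitable bounded sequence $\{y_n\}$ in $C$, and then invoke the condition~(Z) directly for $\{y_n\}$.

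So fix a weak cluster point $w$ of $\{z_n\}$ and a point $p\in F$; the goal is to show $w\in F$. First I would choose a strictly increasing function $\sigma\colon\N\to\N$ with $z_{\sigma(n)} \rightharpoonup w$. Since $\tau$ is eventually increasing, the integer sequence $\{\tau(\sigma(n))\}$ is non-decreasing and unbounded, so after replacing $\sigma$ by a further subsequence I may assume in addition that $\mu := \tau\circ\sigma$ is strictly increasing; note $z_{\sigma(n)}\rightharpoonup w$ still holds. Then, writing $R(\mu) = \{\mu(n): n\in\N\}$, I would define $\{y_n\}$ in $C$ by
\[
 y_n = \begin{cases}
 z_{\sigma\circ\mu^{-1}(n)} & \text{if $n\in R(\mu)$};\\
 p & \text{if $n\notin R(\mu)$.}
 \end{cases}
\]
This sequence is clearly bounded. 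For $n = \mu(m)\in R(\mu)$ one has $\tau(\sigma(m)) = \mu(m) = n$, hence $T_n y_n - y_n = T_{\tau(\sigma(m))}z_{\sigma(m)} - z_{\sigma(m)}$, which converges to $0$ as $n\to\infty$ because $\sigma(m)\to\infty$ and $T_{\tau(k)}z_k - z_k\to 0$ by hypothesis; for $n\notin R(\mu)$ we have $y_n = p\in F\subset F(T_n)$, so $T_n y_n - y_n = 0$. Hence $T_n y_n - y_n\to 0$.

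Now the condition~(Z) applies to $\{y_n\}$ and tells us that every weak cluster point of $\{y_n\}$ belongs to $F$. Since $y_{\mu(n)} = z_{\sigma(n)}\rightharpoonup w$ and $\mu$ is strictly increasing, $w$ is a weak cluster point of $\{y_n\}$, whence $w\in F$. The only point requiring care — exactly as in Lemma~\ref{lm:tauSRNS} — is the bookkeeping that produces a strictly increasing $\mu = \tau\circ\sigma$ (so that $\mu^{-1}$ is defined on $R(\mu)$) while keeping the associated subsequence of $\{z_n\}$ weakly convergent to $w$; after that is arranged, the convergence $T_n y_n - y_n\to 0$ and the identification of $w$ as a weak cluster point of $\{y_n\}$ are routine.
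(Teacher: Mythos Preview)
Your proof is correct and follows essentially the same approach as the paper's: you choose $\sigma$ so that $z_{\sigma(n)}\rightharpoonup w$ and $\mu=\tau\circ\sigma$ is strictly increasing, define the interpolated sequence $\{y_n\}$ exactly as in Lemma~\ref{lm:tauSRNS}, verify $T_n y_n - y_n\to 0$, and then read off $w\in F$ from condition~(Z) since $\{z_{\sigma(n)}\}=\{y_{\mu(n)}\}$ is a subsequence of $\{y_n\}$. The only cosmetic difference is that you spell out the passage to a further subsequence ensuring $\mu$ is strictly increasing, which the paper simply asserts.
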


\begin{proof}
 Let $z$ be a weak cluster point of $\{z_n\}$. 
 Then there exists a strictly increasing function $\sigma \colon \N \to \N$
 such that $z_{\sigma(n)} \rightharpoonup z$ as $n\to \infty$ and 
 $\tau \circ \sigma$ is strictly increasing. 
 Set $\mu = \tau \circ \sigma$ and $R(\mu) = \{\mu(n): n \in \N\}$. 
 Define a sequence $\{y_n\}$ in $C$ as follows:
 For each $n\in \N$, 
 \[
  y_n = \begin{cases}
	 z_{\sigma \circ \mu^{-1}(n)} & \text{if } n \in  R(\mu);\\
	 p & \text{if } n \notin R (\mu),
	\end{cases}
 \]
 where $p$ is a point in $F$. 
 Then it is clear that $\{ y_n \}$ is bounded, 
 \[
 y_n - T_n y_n = z_{\sigma \circ \mu^{-1}(n)} 
 -  T_{\tau (\sigma \circ \mu^{-1}(n))} z_{\sigma \circ \mu^{-1}(n)}
 \]
 for $n \in R (\mu)$, and $y_n - T_n y_n = 0$ for $n \notin R (\mu)$. 
 Since $z_n - T_{\tau(n)} z_n \to 0$ and $\sigma \circ \mu^{-1}$ is
 strictly increasing, it follows that $y_n - T_n y_n \to 0$. 
 Noting that $\mu$ is strictly increasing and 
 $y_{\mu(n)} = z_{\sigma \circ \mu^{-1} (\mu (n))} = z_{\sigma(n)}$
 for every $n\in \N$, 
 we know that $\{z_{\sigma (n)}\}$ is a subsequence of $\{y_n\}$, 
 and hence $z$ is a weak cluster point of $\{ y_n \}$. 
 Since $\{T_n\}$ satisfies the condition~(Z), we conclude that $z \in F$. 
\end{proof}

\begin{lemma}\label{lm:limsup_le_0}
 Let $\{T_n\}$ be a sequence of mappings of $C$ into $E$, 
 $F$ be a nonempty closed convex subset of $E$, 
 $\{z_n\}$ a bounded sequence in $C$ such that $z_n - T_n z_n \to 0$, 
 and $u \in E$. 
 Suppose that every weak cluster point of $\{z_n\}$ belongs to $F$. Then 
 \[
 \limsup_{n\to \infty} \ip{T_n z_n - w}{Ju - Jw} \leq 0 , 
 \]
 where $w=Q_F(u)$. 
\end{lemma}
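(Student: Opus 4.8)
The plan is to argue by contradiction. Suppose $\limsup_{n\to\infty}\ip{T_n z_n - w}{Ju - Jw} > 0$; then there is $\delta > 0$ and a strictly increasing function $\sigma\colon\N\to\N$ with $\ip{T_{\sigma(n)} z_{\sigma(n)} - w}{Ju - Jw} \ge \delta$ for all $n\in\N$. Since $\{z_n\}$ is bounded and $E$ is reflexive (being smooth, strictly convex, and reflexive as assumed in this part of the paper), I can pass to a further subsequence along which $z_{\sigma(n)}$ converges weakly to some point $z\in E$. Because $C$ is weakly closed (it is closed and convex), $z\in C$, and by hypothesis every weak cluster point of $\{z_n\}$ lies in $F$, so $z\in F$.

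Next I would transfer the weak convergence from $\{z_n\}$ to $\{T_n z_n\}$: since $z_n - T_n z_n \to 0$, we have $T_{\sigma(n)} z_{\sigma(n)} = z_{\sigma(n)} - (z_{\sigma(n)} - T_{\sigma(n)} z_{\sigma(n)}) \rightharpoonup z$ as well. Taking the limit along this subsequence in the inequality $\ip{T_{\sigma(n)} z_{\sigma(n)} - w}{Ju - Jw}\ge\delta$ gives $\ip{z - w}{Ju - Jw}\ge\delta > 0$, that is, $\ip{z - w}{Ju - JQ_F(u)} > 0$. This contradicts the variational characterization \eqref{eqn:generalized-projection} of the generalized projection $Q_F$ onto the nonempty closed convex set $F$, applied with the point $z\in F$ in the role of "$z$" there. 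Hence the assumed strict positivity of the limsup is impossible, and the conclusion follows.

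The only genuine point requiring care is the legitimacy of extracting a weakly convergent subsequence and then taking the limit of the linear functional $x\mapsto\ip{x - w}{Ju - Jw}$ along it: both are standard once reflexivity of $E$ and the fact that $Ju - Jw$ is a fixed element of $E^*$ are in hand, so there is no real obstacle. One should note that $Q_F(u)$ is well defined because $F$ is a nonempty closed convex subset of $E$ and $E$ is smooth, strictly convex, and reflexive, which is exactly the standing hypothesis in force here; and $F$ need not be of the form $\bigcap F(T_n)$ for this lemma — it is only assumed to be a nonempty closed convex set containing all weak cluster points of $\{z_n\}$, which is all the argument uses.
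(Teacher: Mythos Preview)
Your proof is correct and follows essentially the same approach as the paper: extract a weakly convergent subsequence (using boundedness and reflexivity), note that its weak limit lies in $F$ by hypothesis, and apply the variational inequality~\eqref{eqn:generalized-projection} for $Q_F$. The only cosmetic difference is that the paper argues directly---first replacing $T_n z_n$ by $z_n$ in the $\limsup$ via $z_n - T_n z_n \to 0$, then choosing a subsequence that simultaneously realizes the $\limsup$ and converges weakly---whereas you wrap the same steps in a contradiction; this is not a substantive distinction.
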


\begin{proof}
 Since $z_n - T_n z_n \to 0$ and $\{z_n\}$ is bounded, 
 there exists a weakly convergent subsequence 
 $\{ z_{n_i} \}$ of $\{ z_n \}$ such that 
 \begin{align*}
  \limsup_{n\to \infty} \ip{T_n z_n - w}{Ju-Jw}
  &= \limsup_{n\to \infty} \ip{z_n - w}{Ju-Jw}\\
  &= \lim_{i\to \infty} \ip{z_{n_i} - w}{Ju-Jw}.
 \end{align*}
 Let $z$ be the weak limit of $\{ z_{n_i} \}$. 
 By assumption, we see that $z \in F$. 
 Thus~\eqref{eqn:generalized-projection} shows that 
 \[
 \lim_{i\to \infty} \ip{z_{n_i} - w}{Ju-Jw} = \ip{z - w}{Ju-Jw} \leq 0, 
 \]
 which is the desired result.
\end{proof}

\section{Strong convergence theorems for strongly relatively
 nonexpansive sequences}\label{s:srns}

In this section, we prove the following strong convergence theorem: 

\begin{theorem}\label{th:tauSRNS}
 Let $E$ be a smooth and uniformly convex Banach space, 
 $C$ a nonempty closed convex subset of $E$, 
 $\{S_n\}$ a sequence of mappings of $C$ into $E$
 such that $F = \bigcap_{n=1}^\infty F(S_n)$ is nonempty,
 and $\{\alpha_n\}$ a sequence in $[0,1]$ such that $\alpha_n \to 0$. 
 Let $u$ be a point in $E$ and $\{x_n\}$ a sequence defined by 
 $x_1 \in C$ and 
 \begin{equation}\label{eq:def1}
  x_{n+1}= Q_C J^{-1}\bigl( \alpha_n J u + (1-\alpha_n)J S_n x_n \bigr)
 \end{equation}
 for $n\in\N$. Suppose that 
 \begin{itemize}
  \item $\{S_n\}$ is a strongly relatively nonexpansive sequence; 
  \item $\{S_n\}$ satisfies the condition~(Z);
  \item $\alpha_n > 0$ for every $n\in \N$ 
	and $\sum_{n=1}^\infty \alpha_n = \infty$. 
 \end{itemize}
 Then $\{x_n\}$ converges strongly to $w=Q_F(u)$. 
\end{theorem}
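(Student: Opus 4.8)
The plan is to run the standard Halpern-type argument, but in the presence of the generalized projection $Q_C$ and with the ``no contraction factor'' difficulty handled via Maingé's lemma (Lemma~\ref{lm:mainge}) in the form of Lemma~\ref{lm:neo_mainge}. First I would fix $p \in F$ and show that $\{x_n\}$ is bounded. Writing $y_n = J^{-1}(\alpha_n Ju + (1-\alpha_n)JS_n x_n)$, so $x_{n+1} = Q_C y_n$, one uses that $Q_C$ is of type~(r) together with the convexity estimate \eqref{eq:J-convex} and the fact that each $S_n$ is of type~(r):
\[
 \phi(p, x_{n+1}) \le \phi(p, y_n)
 \le \alpha_n \phi(p,u) + (1-\alpha_n)\phi(p,S_n x_n)
 \le \alpha_n \phi(p,u) + (1-\alpha_n)\phi(p,x_n),
\]
so $\phi(p,x_n) \le \max\{\phi(p,u),\phi(p,x_1)\}$ for all $n$, giving boundedness of $\{x_n\}$, hence of $\{S_n x_n\}$ and $\{y_n\}$.

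The core of the argument is a recursive inequality for $\xi_n := \phi(w,x_n)$ with $w = Q_F(u)$. Using \eqref{eq:gp_by_phi} for $Q_C$, then \eqref{eq:KT2004} with $x^* = \alpha_n Ju + (1-\alpha_n)JS_n x_n$ and $y^* = \alpha_n(Ju - JS_n x_n)$ (so that $x^* - y^* = JS_n x_n$), and then $\phi(w,S_n x_n)\le\phi(w,x_n)$, I expect to reach
\[
 \phi(w,x_{n+1}) \le (1-\alpha_n)\phi(w,x_n)
 + 2\alpha_n \ip{y_n - w}{Ju - JS_n x_n} + (\text{correction terms}),
\]
and after using $y_n - S_n x_n \to 0$ (which follows from $\alpha_n\to 0$, boundedness, and \eqref{eq:equiv_x_n-y_n}) the crucial term becomes essentially $2\alpha_n\ip{S_n x_n - w}{Ju - Jw}$ plus a term controlled by $\phi(S_n x_n, x_n)$. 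So one wants two things: that $\phi(S_n x_n, x_n)\to 0$ (equivalently $S_n x_n - x_n \to 0$), and that $\limsup_n \ip{S_n x_n - w}{Ju - Jw}\le 0$.

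The main obstacle is exactly establishing $S_n x_n - x_n \to 0$: there is no contraction constant to force this directly, so one argues by cases on the sequence $\xi_n = \phi(w,x_n)$. If $\{\xi_n\}$ converges, then from the recursion the term $\phi(w,x_n)-\phi(w,S_n x_n)\to 0$, hence (combining with type~(r) and $\alpha_n\to0$) one gets $\phi(p,x_n)-\phi(p,S_nx_n)\to 0$ up to controllable errors, and the strongly relatively nonexpansive property yields $\phi(S_n x_n, x_n)\to 0$; then Lemma~\ref{lm:limsup_le_0} (with condition~(Z) giving the hypothesis on weak cluster points via $S_n x_n - x_n\to0$) gives the $\limsup\le 0$ bound, and Lemma~\ref{lm:seq} finishes $\xi_n\to 0$. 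If $\{\xi_n\}$ does not converge, apply Lemma~\ref{lm:neo_mainge} to get $N$ and an eventually increasing $\tau$ with $\xi_{\tau(n)}\le\xi_{\tau(n)+1}$ and $\xi_n\le\xi_{\tau(n)+1}$ for $n\ge N$; the inequality $\xi_{\tau(n)}\le\xi_{\tau(n)+1}$ plugged into the recursion forces $\phi(w,x_{\tau(n)})-\phi(w,S_{\tau(n)}x_{\tau(n)})\to 0$ and hence, via Lemma~\ref{lm:tauSRNS} and Lemma~\ref{2zero} (to handle $\alpha_{\tau(n)}\to 0$), $\phi(S_{\tau(n)}x_{\tau(n)},x_{\tau(n)})\to 0$, so $S_{\tau(n)}x_{\tau(n)} - x_{\tau(n)}\to 0$. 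Then Lemma~\ref{lm:tauZ} gives that every weak cluster point of $\{x_{\tau(n)}\}$ is in $F$, so Lemma~\ref{lm:limsup_le_0} applies along $\tau(n)$; feeding $\xi_{\tau(n)}\le\xi_{\tau(n)+1}$ back into the recursion bounds $\alpha_{\tau(n)}\xi_{\tau(n)}$ by a quantity with nonpositive $\limsup$, so $\xi_{\tau(n)}\to 0$ and thus $\xi_{\tau(n)+1}\to 0$; finally $\xi_n\le\xi_{\tau(n)+1}$ for $n\ge N$ gives $\xi_n\to 0$. In either case $\phi(w,x_n)\to 0$, and Lemma~\ref{lm:kamimura2002} yields $x_n\to w=Q_F(u)$, as claimed.
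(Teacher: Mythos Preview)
Your overall strategy matches the paper's: establish boundedness, derive a recursive inequality of the form $\phi(w,x_{n+1}) \le (1-\alpha_n)\phi(w,x_n) + 2\alpha_n\ip{y_n - w}{Ju - Jw}$, then split into two cases depending on whether $\{\phi(w,x_n)\}$ converges, invoking Lemmas~\ref{lm:tauSRNS}, \ref{lm:tauZ}, \ref{lm:limsup_le_0} along the reindexing $\tau$ from Lemma~\ref{lm:neo_mainge} in the non-convergent case, and Lemma~\ref{lm:seq} to finish in the convergent case. The paper packages exactly this as Lemmas~\ref{lm:bdd}--\ref{lm:convergent}.

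There is, however, a genuine gap in your derivation of the recursive inequality. With your choice $y^* = \alpha_n(Ju - JS_n x_n)$ one has $x^* - y^* = JS_n x_n$, so \eqref{eq:KT2004} only gives
\[
 \phi(w,x_{n+1}) \le \phi(w,y_n) \le \phi(w,S_n x_n) + 2\alpha_n\ip{y_n - w}{Ju - JS_n x_n}
 \le \phi(w,x_n) + 2\alpha_n\ip{y_n - w}{Ju - JS_n x_n},
\]
with \emph{no} factor $(1-\alpha_n)$ in front of $\phi(w,x_n)$; your ``expected'' inequality does not follow from this $y^*$. That factor is essential: in the convergent case it is precisely what makes Lemma~\ref{lm:seq} applicable, and in the non-convergent case the step ``$\xi_{\tau(n)}\le\xi_{\tau(n)+1}$ bounds $\alpha_{\tau(n)}\xi_{\tau(n)}$'' collapses without it (you would only get $0\le 2\alpha_{\tau(n)}\ip{\cdots}$). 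The fix, and what the paper does, is to take $y^* = \alpha_n(Ju - Jw)$; then $x^* - y^* = \alpha_n Jw + (1-\alpha_n)JS_n x_n$, and applying \eqref{eq:KT2004} followed by \eqref{eq:J-convex} (using $\phi(w,w)=0$) yields the clean inequality
\[
 \phi(w,x_{n+1}) \le (1-\alpha_n)\phi(w,S_n x_n) + 2\alpha_n\ip{y_n - w}{Ju - Jw}
 \le (1-\alpha_n)\phi(w,x_n) + 2\alpha_n\ip{y_n - w}{Ju - Jw},
\]
after which your two-case argument goes through verbatim. A minor side remark: your appeal to \eqref{eq:equiv_x_n-y_n} for $y_n - S_n x_n \to 0$ is not licit here, since that equivalence needs $E$ uniformly smooth; instead note $Jy_n - JS_n x_n = \alpha_n(Ju - JS_n x_n)\to 0$ and use that $J^{-1}$ is uniformly norm-to-norm continuous on bounded sets because $E^*$ is uniformly smooth.
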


First, we show some lemmas; then we prove Theorem~\ref{th:tauSRNS}. 
In the rest of this section, we set
\[
 y_n = J^{-1}\bigl( \alpha_n J u + (1-\alpha_n)J S_n x_n \bigr)
\]
for $n \in \N$, so \eqref{eq:def1} is reduced to $x_{n+1}= Q_C (y_n)$. 

\begin{lemma}\label{lm:bdd}
 Both $\{x_n\}$ and $\{S_n x_n\}$ are bounded,
 and moreover, the following hold: 
 \begin{enumerate}
  \item $y_n - S_n x_n \to 0$; \label{y_n-S_nx_n-to0}
  \item $\phi(w,x_{n+1}) \leq \alpha_n \phi(w,u) + \phi(w,S_nx_n)$
	for every $n\in \N$; \label{basic1}
  \item $\phi(w,x_{n+1}) \leq (1-\alpha_n)\phi(w,x_n) + 
	2 \alpha_n \ip{y_n - w}{Ju-Jw}$
	for every $n\in \N$. \label{basic2}
 \end{enumerate}
\end{lemma}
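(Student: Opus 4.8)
The plan is to carry everything through the single point $w = Q_{F}(u)$, which is well defined since each $S_n$ is of type~(r), so that every $F(S_n)$ is closed and convex and hence $F$ is a nonempty closed convex subset of $E$; moreover $w \in F \subseteq C$ and $w \in F(S_n)$ for every $n$. The first task is to obtain boundedness together with the basic recursion underlying assertions (2) and (3). From the definition of $y_n$ and the convexity inequality~\eqref{eq:J-convex} with $\lambda = \alpha_n$ one gets $\phi(w,y_n) \le \alpha_n \phi(w,u) + (1-\alpha_n)\phi(w,S_n x_n)$, and since $S_n$ is of type~(r) with $w \in F(S_n)$ this becomes $\phi(w,y_n) \le \alpha_n \phi(w,u) + (1-\alpha_n)\phi(w,x_n)$. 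Because $x_{n+1} = Q_C(y_n)$ and $w \in C$, inequality~\eqref{eq:gp_by_phi} gives $\phi(w,x_{n+1}) \le \phi(w,y_n)$, so that
\[
 \phi(w,x_{n+1}) \le \alpha_n \phi(w,u) + (1-\alpha_n)\phi(w,x_n).
\]
An induction then yields $\phi(w,x_n) \le \max\{\phi(w,u),\phi(w,x_1)\}$ for all $n$; by~\eqref{eqn:phi-lower} the sequence $\{x_n\}$ is bounded, hence so is $\{S_n x_n\}$ (again via $\phi(w,S_n x_n) \le \phi(w,x_n)$), and so is $\{y_n\}$ because $\norm{Jy_n} \le \alpha_n\norm{Ju} + (1-\alpha_n)\norm{JS_n x_n}$. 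Assertion~(2) is then immediate: combine $\phi(w,x_{n+1}) \le \phi(w,y_n) \le \alpha_n\phi(w,u) + (1-\alpha_n)\phi(w,S_n x_n)$ with $1-\alpha_n \le 1$.

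For assertion~(1) I would apply~\eqref{eq:J-convex} once more, this time with $w$ replaced by $S_n x_n$; since $\phi(S_n x_n, S_n x_n) = 0$ this gives
\[
 \phi(S_n x_n, y_n) \le \alpha_n \phi(S_n x_n, u) \to 0,
\]
the convergence following from boundedness of $\{S_n x_n\}$ and $\alpha_n \to 0$. As $\{S_n x_n\}$ and $\{y_n\}$ are bounded, Lemma~\ref{lm:kamimura2002} yields $S_n x_n - y_n \to 0$. I would deliberately avoid the route through $Jy_n - JS_n x_n = \alpha_n(Ju - JS_n x_n) \to 0$, since passing back from that to $y_n - S_n x_n \to 0$ would require norm-to-norm continuity of $J^{-1}$ on bounded sets, i.e.\ uniform smoothness of $E$, which is not among the hypotheses.

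Assertion~(3) is the step needing the sharpest choice. Starting again from $\phi(w,x_{n+1}) \le \phi(w,y_n)$, I would invoke~\eqref{eq:KT2004} with $x = w$, $x^* = Jy_n = \alpha_n Ju + (1-\alpha_n)JS_n x_n$, and $y^* = \alpha_n(Ju - Jw)$. Then $x^* - y^* = (1-\alpha_n)JS_n x_n + \alpha_n Jw$ and $J^{-1}x^* = y_n$, so~\eqref{eq:KT2004} reads
\[
 \phi(w,y_n) \le \phi\bigl(w, J^{-1}((1-\alpha_n)JS_n x_n + \alpha_n Jw)\bigr) + 2\alpha_n \ip{y_n - w}{Ju - Jw}.
\]
Estimating the first term on the right by~\eqref{eq:J-convex} (using $\phi(w,w)=0$) and then by the type~(r) property of $S_n$ gives $\phi(w, J^{-1}((1-\alpha_n)JS_n x_n + \alpha_n Jw)) \le (1-\alpha_n)\phi(w,S_n x_n) \le (1-\alpha_n)\phi(w,x_n)$, which together with the preceding display and $\phi(w,x_{n+1}) \le \phi(w,y_n)$ finishes the proof. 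The one genuine obstacle is recognizing that $y^* = \alpha_n(Ju - Jw)$ is exactly the choice making the correction term in~\eqref{eq:KT2004} collapse to $2\alpha_n\ip{y_n - w}{Ju - Jw}$; the rest is routine manipulation of $\phi$.
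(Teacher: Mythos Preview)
Your proof is correct and, apart from assertion~(1), matches the paper's argument essentially line for line: the same convexity estimate via~\eqref{eq:J-convex}, the same induction for boundedness, and the same application of~\eqref{eq:KT2004} with $y^* = \alpha_n(Ju - Jw)$ for assertion~(3).

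For assertion~(1) you take a different route. The paper computes $Jy_n - JS_n x_n = \alpha_n(Ju - JS_n x_n) \to 0$ directly, and then passes to $y_n - S_n x_n \to 0$ using that $J^{-1}$ is uniformly norm-to-norm continuous on bounded sets. Your alternative, bounding $\phi(S_n x_n, y_n) \le \alpha_n \phi(S_n x_n, u) \to 0$ via~\eqref{eq:J-convex} and then invoking Lemma~\ref{lm:kamimura2002}, is equally valid and perhaps a bit cleaner since it stays entirely within the $\phi$-calculus. However, your stated reason for avoiding the paper's route is mistaken: uniform continuity of $J^{-1}$ on bounded sets requires uniform smoothness of $E^*$, not of $E$, and uniform smoothness of $E^*$ is equivalent to uniform convexity of $E$, which \emph{is} among the hypotheses. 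So both arguments are available here; yours simply uses one fewer structural fact about the space.
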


\begin{proof}
 Since $Q_C$ and $S_n$ are of type~(r) and $w\in F(S_n) \subset C$, 
 it follows from~\eqref{eq:J-convex} that
 \begin{align}
  \begin{split}\label{eq:bdd}
  \phi(w,x_{n+1}) 
   &\leq \phi(w,y_n)\\
  &\leq \alpha_n \phi(w,u) + (1-\alpha_n)\phi(w,S_n x_n)\\
  &\leq \alpha_n \phi(w,u) + (1-\alpha_n)\phi(w,x_n)
  \end{split}
 \end{align}
 for every $n\in \N$. 
 Thus, by induction on $n$, we have
 \[
  \phi(w,S_n x_n) \leq \phi(w,x_n) 
 \leq \max \{ \phi(w,x_1), \phi(w,u)\}. 
 \]
 Therefore, by virtue of~\eqref{eqn:phi-lower}, 
 it turns out that $\{x_n\}$ and $\{S_n x_n\}$ are bounded. 

 By $\alpha_n \to 0$, it is clear that 
 $Jy_n - JS_n x_n = \alpha_n (Ju - JS_n x_n)\to 0$. 
 This shows that 
 \[
 y_n - S_n x_n = J^{-1}Jy_n - J^{-1}JS_n x_n \to 0
 \]
 because $E^*$ is uniformly smooth and 
 $J^{-1}$ is uniformly continuous on every bounded set.
 Thus \eqref{y_n-S_nx_n-to0} holds. 

 \eqref{basic1} follows from~\eqref{eq:bdd}. 

 Since $S_n$ is of type~(r), it follows from \eqref{eq:bdd}, 
 \eqref{eq:KT2004}, and~\eqref{eq:J-convex} that 
 \begin{align}
  \begin{split}\label{eq:final}
   \phi(w,x_{n+1}) &\leq \phi(w,y_n)\\
   & \leq \phi \Bigl( w, 
   J^{-1} \bigl( 
   \alpha_n Ju + (1-\alpha_n) J S_n x_n  - \alpha_n (Ju-Jw)
   \bigr) \Bigr) \\
   &\qquad + 2 \ip{y_n - w}{\alpha_n (Ju-Jw)}\\
   & \leq (1-\alpha_n)\phi(w,S_n x_n) + \alpha_n \phi(w,w) 
   + 2 \alpha_n \ip{y_n - w}{Ju-Jw}\\
   &\leq (1-\alpha_n)\phi(w,x_n) + 2 \alpha_n \ip{y_n - w}{Ju-Jw}
  \end{split}
 \end{align}
  for every $n\in \N$. 
  Therefore, \eqref{basic2} holds. 
\end{proof}

\begin{lemma}\label{th:srns}
 Suppose that
 \begin{equation}\label{eq:limsup_assumption}
  \limsup_{n \to \infty} \bigl(
   \phi(w, x_n) - \phi(w, x_{n+1}) \bigr) \leq 0.
 \end{equation}
 Then $\{x_n\}$ converges strongly to $w$. 
\end{lemma}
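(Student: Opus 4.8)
The plan is to combine the recursion in Lemma~\ref{lm:bdd}\eqref{basic2} with the auxiliary machinery about eventually increasing functions (Lemmas~\ref{lm:neo_mainge}, \ref{lm:tauSRNS}, \ref{lm:tauZ}, \ref{lm:limsup_le_0}) so that the hypothesis \eqref{eq:limsup_assumption} forces $\phi(w,x_n)\to 0$; then Lemma~\ref{lm:kamimura2002} gives $x_n-w\to 0$, i.e.\ strong convergence to $w$. Set $\xi_n=\phi(w,x_n)$, a nonnegative sequence. First I would observe that if $\{\xi_n\}$ is convergent, then \eqref{eq:limsup_assumption} reads $\xi_n-\xi_{n+1}\to 0$ is compatible with the limit, but more usefully, I would run the argument below directly; the cleanest route is a case split on whether $\{\xi_n\}$ converges.

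\emph{Case 1: $\{\xi_n\}$ converges.} Then $\xi_n-\xi_{n+1}\to 0$, so in particular $\phi(w,x_n)-\phi(w,x_{n+1})\to 0$, and since $\phi(w,x_{n+1})\le\phi(w,S_nx_n)\le\phi(w,x_n)$ by \eqref{eq:bdd}, we get $\phi(w,x_n)-\phi(w,S_nx_n)\to 0$. Because $\{S_n\}$ is a strongly relatively nonexpansive sequence and $\{x_n\}$ is bounded (Lemma~\ref{lm:bdd}), this yields $\phi(S_nx_n,x_n)\to 0$, hence $S_nx_n-x_n\to 0$ by Lemma~\ref{lm:kamimura2002}. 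Now the condition~(Z) tells us every weak cluster point of $\{x_n\}$ lies in $F$, so Lemma~\ref{lm:limsup_le_0} (with $z_n=x_n$) gives $\limsup_n\ip{S_nx_n-w}{Ju-Jw}\le 0$, and since $y_n-S_nx_n\to 0$ and $J$ is bounded on bounded sets, also $\limsup_n\ip{y_n-w}{Ju-Jw}\le 0$. Feeding this into Lemma~\ref{lm:bdd}\eqref{basic2}, namely $\xi_{n+1}\le(1-\alpha_n)\xi_n+2\alpha_n\ip{y_n-w}{Ju-Jw}$, together with $\sum\alpha_n=\infty$, Lemma~\ref{lm:seq} gives $\xi_n\to 0$.

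\emph{Case 2: $\{\xi_n\}$ does not converge.} By Lemma~\ref{lm:neo_mainge} there are $N\in\N$ and an eventually increasing $\tau\colon\N\to\N$ with $\xi_{\tau(n)}\le\xi_{\tau(n)+1}$ for all $n$ and $\xi_n\le\xi_{\tau(n)+1}$ for all $n\ge N$. From $\xi_{\tau(n)}\le\xi_{\tau(n)+1}$ and \eqref{eq:bdd} applied at index $\tau(n)$ we get $0\le\xi_{\tau(n)+1}-\xi_{\tau(n)}\le\phi(w,S_{\tau(n)}x_{\tau(n)})-\xi_{\tau(n)}\le 0$ after rearranging with $\phi(w,x_{\tau(n)+1})\le\phi(w,S_{\tau(n)}x_{\tau(n)})$; more carefully, $\xi_{\tau(n)}\le\xi_{\tau(n)+1}\le\phi(w,S_{\tau(n)}x_{\tau(n)})\le\xi_{\tau(n)}$, so all these are equal in the limit and $\phi(w,x_{\tau(n)})-\phi(w,S_{\tau(n)}x_{\tau(n)})\to 0$. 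Applying Lemma~\ref{lm:tauSRNS} with $z_n=x_{\tau(n)}$ gives $\phi(S_{\tau(n)}x_{\tau(n)},x_{\tau(n)})\to 0$, hence $S_{\tau(n)}x_{\tau(n)}-x_{\tau(n)}\to 0$ by Lemma~\ref{lm:kamimura2002}. Lemma~\ref{lm:tauZ} then shows every weak cluster point of $\{x_{\tau(n)}\}$ lies in $F$, so Lemma~\ref{lm:limsup_le_0} (applied along $\tau$) gives $\limsup_n\ip{y_{\tau(n)}-w}{Ju-Jw}\le 0$, using $y_{\tau(n)}-S_{\tau(n)}x_{\tau(n)}\to 0$. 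Now from \eqref{basic2} at index $\tau(n)$, $\xi_{\tau(n)+1}\le(1-\alpha_{\tau(n)})\xi_{\tau(n)}+2\alpha_{\tau(n)}\ip{y_{\tau(n)}-w}{Ju-Jw}\le(1-\alpha_{\tau(n)})\xi_{\tau(n)+1}+2\alpha_{\tau(n)}\ip{y_{\tau(n)}-w}{Ju-Jw}$, where I used $\xi_{\tau(n)}\le\xi_{\tau(n)+1}$; since $\alpha_{\tau(n)}>0$ this yields $\xi_{\tau(n)+1}\le 2\ip{y_{\tau(n)}-w}{Ju-Jw}$, so $\limsup_n\xi_{\tau(n)+1}\le 0$, i.e.\ $\xi_{\tau(n)+1}\to 0$. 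Finally, for $n\ge N$ we have $0\le\xi_n\le\xi_{\tau(n)+1}\to 0$, so $\xi_n\to 0$. In either case $\phi(w,x_n)\to 0$, and Lemma~\ref{lm:kamimura2002} gives $x_n-w\to 0$.

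The main obstacle is the bookkeeping in Case~2: squeezing $\phi(w,x_{\tau(n)})-\phi(w,S_{\tau(n)}x_{\tau(n)})\to 0$ out of $\xi_{\tau(n)}\le\xi_{\tau(n)+1}$ requires carefully threading the chain of inequalities in \eqref{eq:bdd} at the shifted index, and one must verify that Lemmas~\ref{lm:tauSRNS} and~\ref{lm:tauZ} apply with $z_n=x_{\tau(n)}$ (this is exactly what those lemmas were designed for, so it is routine once set up). A secondary point is the step where $\alpha_{\tau(n)}>0$ is cancelled — it is essential that $\alpha_n>0$ for \emph{every} $n$, which is among the hypotheses; without it one could not pass from the recursion to $\xi_{\tau(n)+1}\le 2\ip{y_{\tau(n)}-w}{Ju-Jw}$.
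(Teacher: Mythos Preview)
Your argument is correct in outline, but it is considerably more elaborate than the paper's, and there is one inaccurate inequality that recurs in both cases.

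\textbf{The inequality issue.} You invoke the sandwich $\phi(w,x_{n+1})\le\phi(w,S_nx_n)\le\phi(w,x_n)$ ``by \eqref{eq:bdd}''. The right-hand inequality is fine, but \eqref{eq:bdd} does not give the left-hand one; what \eqref{basic1} in Lemma~\ref{lm:bdd} actually gives is $\phi(w,x_{n+1})\le\alpha_n\phi(w,u)+\phi(w,S_nx_n)$. This extra $\alpha_n\phi(w,u)$ term is harmless (it tends to $0$), but your ``all these are equal in the limit'' reasoning in Case~2 needs the same correction: the paper's version is $0\le\phi(w,x_{\tau(n)})-\phi(w,S_{\tau(n)}x_{\tau(n)})\le\phi(w,x_{\tau(n)+1})-\phi(w,S_{\tau(n)}x_{\tau(n)})\le\alpha_{\tau(n)}\phi(w,u)\to 0$.

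\textbf{Comparison with the paper.} The paper proves this lemma \emph{without} any case split and \emph{without} the $\tau$-machinery. The point is that your Case~1 argument does not actually need the assumption that $\{\xi_n\}$ converges: from \eqref{basic1} one has
\[
0\le\phi(w,x_n)-\phi(w,S_nx_n)\le\bigl(\phi(w,x_n)-\phi(w,x_{n+1})\bigr)+\alpha_n\phi(w,u),
\]
and the hypothesis \eqref{eq:limsup_assumption} together with $\alpha_n\to 0$ forces the right-hand side to have $\limsup\le 0$; since the left-hand side is nonnegative, it converges to $0$ outright. From there the rest is exactly your Case~1: strongly relatively nonexpansive $\Rightarrow\phi(S_nx_n,x_n)\to 0$, Lemma~\ref{lm:kamimura2002} $\Rightarrow S_nx_n-x_n\to 0$, condition~(Z) plus Lemma~\ref{lm:limsup_le_0} $\Rightarrow\limsup_n\ip{y_n-w}{Ju-Jw}\le 0$, and Lemma~\ref{lm:seq} with \eqref{basic2} finishes.

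Your Case~2 never uses the hypothesis \eqref{eq:limsup_assumption} at all; it is in fact the proof of the \emph{next} lemma in the paper (Lemma~\ref{lm:convergent}), whose purpose is precisely to show that $\{\phi(w,x_n)\}$ converges so that the present lemma can be applied. By folding both arguments together you have effectively reproved Theorem~\ref{th:tauSRNS} in one go, which is fine logically but obscures the clean division of labour: Lemmas~\ref{lm:tauSRNS}, \ref{lm:tauZ}, \ref{lm:neo_mainge} are only needed for Lemma~\ref{lm:convergent}, not here.
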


\begin{proof}
 We first show that $S_n x_n - x_n \to 0$. 
 Since $S_n$ is of type~(r), it follows from~\eqref{basic1} in
 Lemma~\ref{lm:bdd} that
 \[
  0 \leq \phi(w,x_n) - \phi(w,S_n x_n)
  \leq \phi(w,x_n) - \phi(w,x_{n+1}) + \alpha_n \phi(w,u)
 \]
 for every $n\in \N$, so $\phi(w,x_n) - \phi(w,S_n x_n)\to 0$ 
 by~\eqref{eq:limsup_assumption} and $\alpha_n \to 0$. 
 Since $\{S_n\}$ is a strongly relatively nonexpansive sequence
 and $\{x_n\}$ is bounded by Lemma~\ref{lm:bdd}, 
 $\phi(S_n x_n, x_n)\to 0$. Using Lemma~\ref{lm:kamimura2002},
 we conclude that $S_n x_n - x_n \to 0$. 

 We know that $y_n - S_n x_n \to 0$ by~\eqref{y_n-S_nx_n-to0} in 
 Lemma~\ref{lm:bdd} and $\{S_n\}$ satisfies the condition~(Z) 
 by assumption, so Lemma~\ref{lm:limsup_le_0} implies that 
 \[
 \limsup_{n\to \infty} \ip{y_n - w}{Ju-Jw}
 = \limsup_{n\to \infty} \ip{S_n x_n - w}{Ju-Jw} \leq 0. 
 \]
 It follows from~\eqref{basic2} in Lemma~\ref{lm:bdd} that
 \[
 \phi(w,x_{n+1}) 
 \leq (1-\alpha_n)\phi(w,x_n) + 2 \alpha_n \ip{y_n - w}{Ju-Jw}
 \]
 for every $n\in \N$. 
 Therefore, noting that $\sum_{n=1}^\infty \alpha_n = \infty$
 and using Lemma~\ref{lm:seq}, we conclude that $\phi(w,x_n) \to 0$, 
 and hence $x_n \to w$ by Lemma~\ref{lm:kamimura2002}.
\end{proof}

\begin{lemma}\label{lm:convergent}
 The real number sequence $\{ \phi (w, x_n) \}$ is convergent. 
\end{lemma}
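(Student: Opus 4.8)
The plan is to argue by contradiction, using the Maing\'e-type selection lemma (Lemma~\ref{lm:neo_mainge}) in the same spirit as the proofs of the preceding lemmas. Assume $\{\phi(w,x_n)\}$ is not convergent and set $\xi_n=\phi(w,x_n)$, which is a sequence of nonnegative reals (it is even bounded, by Lemma~\ref{lm:bdd}). Lemma~\ref{lm:neo_mainge} then provides $N\in\N$ and an eventually increasing function $\tau\colon\N\to\N$ with $\xi_{\tau(n)}\le\xi_{\tau(n)+1}$ for all $n\in\N$ and $\xi_n\le\xi_{\tau(n)+1}$ for all $n\ge N$. The whole argument aims to show $\xi_n\to 0$, which will contradict the non-convergence of $\{\xi_n\}$.

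First I would check that $\phi(w,x_{\tau(n)})-\phi(w,S_{\tau(n)}x_{\tau(n)})\to 0$: this difference is nonnegative because each $S_n$ is of type~(r), while Lemma~\ref{lm:bdd}\eqref{basic1} together with $\xi_{\tau(n)}\le\xi_{\tau(n)+1}$ bounds it above by $\alpha_{\tau(n)}\phi(w,u)$, and $\alpha_{\tau(n)}\to 0$ by Lemma~\ref{2zero}. With $z_n=x_{\tau(n)}$ (a bounded sequence in $C$) and $p=w\in F$, Lemma~\ref{lm:tauSRNS} yields $\phi(S_{\tau(n)}x_{\tau(n)},x_{\tau(n)})\to 0$, hence $S_{\tau(n)}x_{\tau(n)}-x_{\tau(n)}\to 0$ by Lemma~\ref{lm:kamimura2002}; then Lemma~\ref{lm:tauZ} shows that every weak cluster point of $\{x_{\tau(n)}\}$ lies in $F$. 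Since $y_n-S_nx_n\to 0$ by Lemma~\ref{lm:bdd}\eqref{y_n-S_nx_n-to0}, also $y_{\tau(n)}-S_{\tau(n)}x_{\tau(n)}\to 0$ by Lemma~\ref{2zero}; applying Lemma~\ref{lm:limsup_le_0} to the mappings $S_{\tau(n)}$ and the sequence $x_{\tau(n)}$ and then absorbing this negligible error, I obtain $\limsup_{n\to\infty}\ip{y_{\tau(n)}-w}{Ju-Jw}\le 0$.

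To finish, I would plug this into Lemma~\ref{lm:bdd}\eqref{basic2}: that estimate combined with $\xi_{\tau(n)}\le\xi_{\tau(n)+1}$ gives $\alpha_{\tau(n)}\xi_{\tau(n)}\le 2\alpha_{\tau(n)}\ip{y_{\tau(n)}-w}{Ju-Jw}$, so dividing by $\alpha_{\tau(n)}>0$ and taking $\limsup$ forces $\xi_{\tau(n)}\to 0$. Then Lemma~\ref{lm:bdd}\eqref{basic1} (using type~(r) once more) gives $\xi_{\tau(n)+1}\le\alpha_{\tau(n)}\phi(w,u)+\xi_{\tau(n)}\to 0$, and finally $0\le\xi_n\le\xi_{\tau(n)+1}$ for $n\ge N$ yields $\xi_n\to 0$, the desired contradiction. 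Hence $\{\phi(w,x_n)\}$ converges.

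I expect the main subtlety to be bookkeeping rather than a genuine obstacle: one must set up the relabeled data so that Lemmas~\ref{lm:tauSRNS}, \ref{lm:tauZ}, and~\ref{lm:limsup_le_0} apply with the composed index $\tau$, bearing in mind that $\tau$ need not be injective (so $\{x_{\tau(n)}\}$ is only a bounded sequence in $C$, not a subsequence of $\{x_n\}$) and that $y_{\tau(n)}$ need not lie in $C$, which is precisely why Lemma~\ref{lm:limsup_le_0} is invoked for $\{x_{\tau(n)}\}$ with the term $y_{\tau(n)}-S_{\tau(n)}x_{\tau(n)}$ handled separately.
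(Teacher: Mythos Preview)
Your proposal is correct and follows essentially the same route as the paper: contradiction via Lemma~\ref{lm:neo_mainge}, then Lemmas~\ref{lm:tauSRNS}, \ref{lm:tauZ}, and~\ref{lm:limsup_le_0} along the index $\tau$ to force $\phi(w,x_n)\to 0$. The only cosmetic difference is the final bookkeeping: the paper observes directly that $\xi_{\tau(n)+1}\le 2\ip{y_{\tau(n)}-w}{Ju-Jw}$ from the same chain you use, whereas you first conclude $\xi_{\tau(n)}\to 0$ and then pass to $\xi_{\tau(n)+1}$ via Lemma~\ref{lm:bdd}\eqref{basic1}; both are equally valid.
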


\begin{proof}
 We assume, to obtain a contraction,  
 that $\{ \phi(w,x_n) \}$ is not convergent. 
 Then Lemma~\ref{lm:neo_mainge} implies that there exist $N\in \N$ and an
 eventually increasing function $\tau\colon \N \to \N$ such that
 \begin{equation}\label{eq:mainge1}
  \phi(w, x_{\tau(n)}) \le \phi(w, x_{\tau(n)+1})
 \end{equation}
 for every $n\in \N$ and 
 \begin{equation}\label{eq:mainge2}
  \phi(w,x_n) \le \phi(w, x_{\tau(n)+1})
 \end{equation}
 for every $n \geq N$. 

 We show that $S_{\tau(n)} x_{\tau(n)} - x_{\tau(n)} \to 0$.
 Since $S_{\tau(n)}$ is of type~(r), 
 it follows from~\eqref{eq:mainge1}, \eqref{basic1} in
 Lemma~\ref{lm:bdd}, and Lemma~\ref{2zero} that 
 \begin{align*}
  0 &\leq \phi(w,x_{\tau(n)}) - \phi(w, S_{\tau(n)} x_{\tau(n)}) \\
  &\leq \phi(w, x_{\tau(n)+1}) - \phi(w, S_{\tau(n)} x_{\tau(n)}) \\
  &\leq \alpha_{\tau(n)} \phi(w,u) \to 0
 \end{align*}
 as $n\to \infty$. 
 Since $\{ x_{\tau(n)}\}$ is bounded and $\{ S_n \}$ is a strongly
 relatively nonexpansive sequence, it follows from Lemma~\ref{lm:tauSRNS}
 that $\phi( S_{\tau(n)} x_{\tau(n)}, x_{\tau(n)}) \to 0$, so we conclude
 that $S_{\tau(n)} x_{\tau(n)} -  x_{\tau(n)} \to 0$ by 
 Lemma~\ref{lm:kamimura2002}.

 Finally, we obtain a contradiction that $\phi(w,x_n)\to 0$. 
 From~\eqref{eq:mainge1} and~\eqref{basic2} in Lemma~\ref{eq:bdd}, 
 we know that
 \begin{align}
  \begin{split}\label{eq:x_tau_n+1}
   \phi(w,x_{\tau(n)}) &\leq \phi(w,x_{\tau(n)+1}) \\
  &\leq (1-\alpha_{\tau(n)}) \phi(w,x_{\tau(n)}) 
  + 2 \alpha_{\tau(n)} \ip{y_{\tau(n)} - w}{Ju-Jw}
  \end{split}
 \end{align}
 for every $n \in \N$, where $y_{\tau(n)} = J^{-1}\bigl( \alpha_{\tau(n)} J u 
 + (1-\alpha_{\tau(n)})J S_{\tau(n)} x_{\tau(n)} \bigr)$ for $n\in \N$. 
 Noting that $\alpha_{\tau(n)} > 0$, 
 \eqref{eq:x_tau_n+1} is reduced to 
 \begin{equation*}
 \phi(w, x_{\tau(n)}) \leq 2 \ip{y_{\tau(n)} - w}{Ju-Jw},
 \end{equation*}
 so that 
 \begin{equation}\label{eq:phi_leq_2}
  \phi(w,x_{\tau(n)+1}) \leq 2 \ip{y_{\tau(n)} - w}{Ju-Jw}
 \end{equation}
 for every $n \in \N$. 
 Since $\{ S_n \}$ satisfies the condition~(Z), it follows from
 Lemma~\ref{lm:tauZ} that every weak cluster point of $\{ x_{\tau(n)}\}$
 belongs to $F$. 
 Using~\eqref{eq:phi_leq_2}, \eqref{y_n-S_nx_n-to0} in Lemma~\ref{lm:bdd}, 
 and Lemma~\ref{lm:limsup_le_0}, we have 
\begin{align*}
  \begin{split}
   \limsup_{n\to \infty} \phi(w,x_{\tau(n)+1}) 
   &\leq 2 \limsup_{n\to \infty} \ip{y_{\tau(n)} - w}{Ju-Jw}\\
   & = 2 \limsup_{n\to \infty} 
   \ip{S_{\tau(n)} x_{\tau(n)} - w}{Ju-Jw} \leq 0.
  \end{split}
 \end{align*}
 Therefore, by virtue of~\eqref{eq:mainge2}, we conclude that
 \begin{align*}
  \limsup_{n\to \infty} \phi(w,x_n) 
  &\leq \limsup_{n\to \infty} \phi(w,x_{\tau(n)+1}) \leq 0,
 \end{align*}
 and hence $\phi(w,x_n)\to 0$, which is a contradiction. 
\end{proof}

\begin{proof}[Proof of Theorem~\ref{th:tauSRNS}]
 Using Lemmas~\ref{th:srns} and~\ref{lm:convergent}, we get the conclusion. 
\end{proof}

\section{Applications}\label{s:app}
In this section, we study the zero point problem for a maximal monotone
operator and the fixed point problem for a relatively nonexpansive
mapping. 
We employ Theorem~\ref{th:tauSRNS} to 
approximate solutions of these problems. 

To prove the first theorem, we need the following lemma: 

\begin{lemma}[\cite{TMJ}*{Lemma~3.5}] \label{lm:L_r-Z}
 Let $E$ be a strictly convex and reflexive Banach space
 whose norm is uniformly G\^{a}teaux differentiable,
 $\{r_n\}$ a sequence of positive real numbers, and $L_{r_n}$ the
 resolvent of a maximal monotone operator $A\subset E\times E^*$. 
 Suppose that $\inf_n r_n >0$ and $A^{-1}0$ is nonempty. 
 Then $\{L_{r_n}\}$ satisfies the condition~(Z). 
\end{lemma}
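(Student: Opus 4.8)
The plan is to verify the condition~(Z) directly from its definition. Let $\{x_n\}$ be a bounded sequence in $E$ with $L_{r_n}x_n - x_n \to 0$, and let $z$ be an arbitrary weak cluster point of $\{x_n\}$; fix a subsequence with $x_{n_i} \rightharpoonup z$. Put $u_n = L_{r_n}x_n$. Since $F(L_{r_n}) = A^{-1}0$ for every $n$, we have $F = \bigcap_{n=1}^\infty F(L_{r_n}) = A^{-1}0$, so it suffices to show $z \in A^{-1}0$. Note that $\{u_n\}$ is bounded (because $u_n - x_n \to 0$) and that $u_{n_i} \rightharpoonup z$ as well.

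The next step is to bring in the defining relation of the resolvent. From $u_n = (J + r_n A)^{-1}Jx_n$ we obtain $Jx_n - Ju_n \in r_n A u_n$, that is, $\tfrac{1}{r_n}(Jx_n - Ju_n) \in A u_n$ for every $n$. Fix $(v,v^*) \in A$ arbitrarily. Monotonicity of $A$ gives
\[
 \ip{u_n - v}{\tfrac{1}{r_n}(Jx_n - Ju_n) - v^*} \geq 0,
\]
and hence
\[
 \ip{u_n - v}{v^*} \leq \frac{1}{r_n}\ip{u_n - v}{Jx_n - Ju_n}
\]
for every $n\in\N$.

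The heart of the argument is to show that the right-hand side tends to $0$. Since $\inf_n r_n > 0$, the numbers $1/r_n$ are bounded, so it is enough to prove $\ip{u_n - v}{Jx_n - Ju_n} \to 0$. Write this as $\ip{u_n}{Jx_n - Ju_n} - \ip{v}{Jx_n - Ju_n}$. For the first term, using $\ip{x_n}{Jx_n} = \norm{x_n}^2$ and $\ip{u_n}{Ju_n} = \norm{u_n}^2$ one computes
\[
 \ip{u_n}{Jx_n - Ju_n} = \ip{u_n - x_n}{Jx_n} + \bigl( \norm{x_n}^2 - \norm{u_n}^2 \bigr),
\]
and both summands tend to $0$: the first because $\norm{Jx_n} = \norm{x_n}$ is bounded and $u_n - x_n \to 0$, the second because $\bigl|\norm{x_n} - \norm{u_n}\bigr| \leq \norm{x_n - u_n} \to 0$ while $\norm{x_n} + \norm{u_n}$ is bounded. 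For the second term, since the norm of $E$ is uniformly G\^{a}teaux differentiable, $J$ is uniformly continuous from the norm topology of $E$ to the weak$^*$ topology of $E^*$ on bounded sets; as $u_n - x_n \to 0$, this yields $\ip{v}{Jx_n - Ju_n} \to 0$ for our fixed $v$. Therefore $\ip{u_n - v}{Jx_n - Ju_n} \to 0$, and consequently $\limsup_{n\to\infty} \ip{u_n - v}{v^*} \leq 0$.

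Passing to the subsequence along which $u_{n_i} \rightharpoonup z$, we get $\ip{z - v}{v^*} \leq 0$, equivalently $\ip{v - z}{v^* - 0} \geq 0$, for every $(v,v^*) \in A$. Thus $A \cup \{(z,0)\}$ is a monotone operator containing $A$, so maximality of $A$ forces $(z,0) \in A$, i.e. $z \in A^{-1}0 = F$; this completes the verification of the condition~(Z). The step I expect to be the main obstacle is the limit $\ip{u_n - v}{Jx_n - Ju_n} \to 0$, and more precisely the term $\ip{v}{Jx_n - Ju_n}$: this is exactly where the hypothesis that the norm of $E$ is uniformly G\^{a}teaux differentiable is indispensable, since without a uniform continuity property of $J$ on bounded sets one cannot pass from $x_n - u_n \to 0$ to $Jx_n - Ju_n \to 0$ in the weak$^*$ sense. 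The remaining ingredients are routine norm computations together with the standard device of enlarging $A$ by the pair $(z,0)$.
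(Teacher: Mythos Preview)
The paper does not contain a proof of this lemma; it is merely quoted from \cite{TMJ}*{Lemma~3.5}. Your argument is correct and is essentially the standard one that appears in that reference: write $u_n=L_{r_n}x_n$, use $\tfrac{1}{r_n}(Jx_n-Ju_n)\in Au_n$ together with monotonicity against an arbitrary $(v,v^*)\in A$, show that $\ip{u_n-v}{Jx_n-Ju_n}\to 0$, and conclude $(z,0)\in A$ by maximality.

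A couple of minor remarks on presentation. First, the phrase ``$J$ is uniformly continuous from the norm topology of $E$ to the weak$^*$ topology of $E^*$ on bounded sets'' is the right fact, but what you actually use (and all you need) is the pointwise consequence: for each fixed $v\in E$ and bounded $\{x_n\},\{u_n\}$ with $x_n-u_n\to 0$, one has $\ip{v}{Jx_n-Ju_n}\to 0$. This is precisely the content of uniform G\^ateaux differentiability of the norm and is the single place where that hypothesis is used, as you correctly identify. Second, the sentence ``$\limsup_{n\to\infty}\ip{u_n-v}{v^*}\le 0$'' is a bit stronger than what your computation gives along the full sequence, but it is harmless: you only need the limit along the subsequence with $u_{n_i}\rightharpoonup z$, and along that subsequence the right-hand side still tends to $0$ because $1/r_{n_i}$ is bounded and $\ip{u_{n_i}-v}{Jx_{n_i}-Ju_{n_i}}\to 0$. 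With these cosmetic adjustments, your proof matches the intended one.
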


We adopt a modified proximal point algorithm introduced by Kohsaka and
Takahashi~\cite{MR2058504} in the following theorem: 

\begin{theorem}\label{th:mm}
 Let $E$ be a uniformly convex Banach space 
 whose norm is uniformly G\^{a}teaux differentiable, 
 $A \subset E\times E^*$ a maximal monotone operator,
 $\{\alpha_n \}$ a sequence in $(0,1]$, 
 and $\{r_n\}$ a sequence of positive real numbers. 
 Suppose that $A^{-1}0$ is nonempty, 
 $\alpha_n \to 0$, $\sum_{n=1}^\infty \alpha_n = \infty $, 
 and $\inf_n r_n > 0$. 
 Let $u$ be a point in $E$ and $\{x_n\}$ a sequence defined by 
 $x_1 \in C$ and 
 \begin{equation}\label{eq:def2-x_n}
  x_{n+1} = J^{-1} \bigl(\alpha_n J u 
   + (1 - \alpha_n) J L_{r_n} x_n \bigr)
 \end{equation}
 for $n\in \N$, where $L_{r_n} = (J+ r_n A)^{-1}J$. 
 Then $\{ x_n \}$ converges strongly to $Q_{A^{-1}0}(u)$.
\end{theorem}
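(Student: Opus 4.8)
The plan is to obtain Theorem~\ref{th:mm} as a direct specialization of Theorem~\ref{th:tauSRNS}, taking $C = E$ and $S_n = L_{r_n}$. First I would record the structural facts needed to invoke \S2--\S4: since $E$ is uniformly convex it is reflexive and strictly convex, and since its norm is uniformly G\^ateaux differentiable $E$ is smooth; hence $J$ is single-valued and bijective, $J^{-1}$ is the duality mapping of $E^*$, and $E^*$ is uniformly smooth. Moreover $E$ is smooth and uniformly convex, which is precisely the standing hypothesis of Theorem~\ref{th:tauSRNS}. Taking the closed convex set in that theorem to be $C = E$ makes the generalized projection $Q_C = Q_E$ the identity mapping, so the recursion \eqref{eq:def2-x_n} is exactly \eqref{eq:def1} with $S_n = L_{r_n}$ and initial point $x_1 \in E$.

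Next I would verify the three bulleted hypotheses of Theorem~\ref{th:tauSRNS} for the sequence $\{L_{r_n}\}$. Since $A^{-1}0$ is nonempty, for each $r_n > 0$ the resolvent $L_{r_n} = (J + r_n A)^{-1}J$ is a well-defined single-valued mapping of $E$ onto $\dom(A)$ with $F(L_{r_n}) = A^{-1}0$; consequently $F = \bigcap_{n=1}^\infty F(L_{r_n}) = A^{-1}0 \ne \emptyset$. By \eqref{eqn:resolvent} each $L_{r_n}$ is of type~(r), and, as recalled in \S3, \eqref{eqn:resolvent} also shows that $\{L_{r_n}\}$ is a strongly relatively nonexpansive sequence. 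The condition~(Z) for $\{L_{r_n}\}$ is supplied by Lemma~\ref{lm:L_r-Z}, whose hypotheses are all met here: $E$ is strictly convex and reflexive with uniformly G\^ateaux differentiable norm, $\inf_n r_n > 0$, and $A^{-1}0$ is nonempty. Finally $\{\alpha_n\} \subset (0,1] \subset [0,1]$ with $\alpha_n \to 0$, $\alpha_n > 0$ for every $n\in\N$, and $\sum_{n=1}^\infty \alpha_n = \infty$.

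Having checked every hypothesis, Theorem~\ref{th:tauSRNS} applies and yields that $\{x_n\}$ converges strongly to $w = Q_F(u) = Q_{A^{-1}0}(u)$, which is the desired conclusion. I do not expect any genuine obstacle: the substantive work is entirely contained in Theorem~\ref{th:tauSRNS} and Lemma~\ref{lm:L_r-Z}, and the only points deserving an explicit word are the implication ``uniformly G\^ateaux differentiable norm $\Rightarrow$ smooth'' (so that Theorem~\ref{th:tauSRNS} is applicable), the identification $Q_E = I$ (so that the two recursions coincide), and the fact, already noted in \S3, that the resolvents of a fixed maximal monotone operator form a strongly relatively nonexpansive sequence.
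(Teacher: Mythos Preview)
Your proposal is correct and follows essentially the same route as the paper's own proof: set $C=E$, $S_n=L_{r_n}$, note that $Q_E$ is the identity so that \eqref{eq:def2-x_n} coincides with \eqref{eq:def1}, and then verify the hypotheses of Theorem~\ref{th:tauSRNS} using $F(L_{r_n})=A^{-1}0$, the strong relative nonexpansiveness of $\{L_{r_n}\}$ (recorded in \S3 and in \cite{MR2529497}), and Lemma~\ref{lm:L_r-Z} for condition~(Z). Your extra remarks explaining why $E$ satisfies the smoothness hypothesis of Theorem~\ref{th:tauSRNS} are accurate and simply make explicit what the paper leaves implicit.
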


\begin{proof}
 Set $S_n = L_{r_n}$ for $n\in \N$. 
 It is known that $F(S_n) = A^{-1}0$ and $L_{r_n}$ is a type~(r)
 self-mapping of $E$ for each $n\in\N$. 
 Hence $\bigcap_{n=1}^\infty F(S_n) = A^{-1} 0$ is nonempty.
 It is also known that $\{ S_n \}$ is a strongly relatively 
 nonexpansive sequence by~\cite{MR2529497}*{Example 3.2}
 and $\{ S_n \}$ satisfies the condition~(Z) by Lemma~\ref{lm:L_r-Z}. 
 It is clear that $Q_E$ is the identity mapping on $E$. 
 Therefore, Theorem~\ref{th:tauSRNS} implies the conclusion. 
\end{proof}

\begin{remark}\label{r:ak_kt}
Theorem~\ref{th:mm} is similar to \cite{MR2058504}*{Theorem 3.3}.
 In~\cite{MR2058504}*{Theorem 3.3}, 
 $E$ is assumed to be smooth and uniformly convex and $\{\alpha_n \}$ in
 $[0,1]$ while $\{r_n\}$ is assumed to diverge to infinity. 
\end{remark}

To prove the next theorem, we need the following lemma: 

\begin{lemma}[\cite{MR2529497}*{Lemma 2.1}]\label{lemma:uc-ft}
 Let $\{x_n\}$ and $\{y_n\}$ be two bounded sequences in a uniformly
 convex Banach space $E$ and $\{\lambda_n\}$ a sequence in $[0,1]$ such
 that $\liminf_{n\to\infty}\lambda_n>0$. Suppose that 
 \[
 \lambda_n \norm{x_n}^2 + (1-\lambda_n)\norm{y_n}^2 
 -\norm{\lambda_n x_n + (1-\lambda_n)y_n}^2 \to 0. 
 \]
 Then $(1-\lambda_n) (x_n-y_n) \to 0$. 
\end{lemma}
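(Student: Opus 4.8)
The plan is to derive the conclusion from the standard geometric inequality that expresses uniform convexity of $E$ at the level of the squared norm. Since $\{x_n\}$ and $\{y_n\}$ are bounded, the number $r:=\sup_{n}\max\{\norm{x_n},\norm{y_n}\}$ is finite, and we may assume $r>0$, since otherwise $x_n=y_n=0$ for all $n$ and there is nothing to prove. The key fact I would invoke is the following routine consequence of the definition of uniform convexity recalled in Section~2: there exists a continuous, strictly increasing, convex function $g\colon[0,2r]\to[0,\infty)$ with $g(0)=0$ such that
\[
 \norm{\lambda x+(1-\lambda)y}^2
 \leq \lambda\norm{x}^2+(1-\lambda)\norm{y}^2-\lambda(1-\lambda)\,g(\norm{x-y})
\]
for all $x,y\in E$ with $\norm{x}\leq r$, $\norm{y}\leq r$ and all $\lambda\in[0,1]$ (note $\norm{x-y}\leq 2r$, so $g(\norm{x-y})$ is defined).

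Applying this inequality with $x=x_n$, $y=y_n$, and $\lambda=\lambda_n$, and rearranging, yields
\[
 \lambda_n(1-\lambda_n)\,g(\norm{x_n-y_n})
 \leq \lambda_n\norm{x_n}^2+(1-\lambda_n)\norm{y_n}^2-\norm{\lambda_n x_n+(1-\lambda_n)y_n}^2,
\]
whose right-hand side tends to $0$ by hypothesis; hence $\lambda_n(1-\lambda_n)\,g(\norm{x_n-y_n})\to 0$. Since $c:=\liminf_{n\to\infty}\lambda_n>0$, there is $N\in\N$ with $\lambda_n\geq c/2$ for all $n\geq N$, and therefore $(1-\lambda_n)\,g(\norm{x_n-y_n})\to 0$.

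It then remains to pass from this to $(1-\lambda_n)(x_n-y_n)\to 0$, which I would do by contradiction. If $(1-\lambda_n)\norm{x_n-y_n}\not\to 0$, then there are $\epsilon>0$ and a strictly increasing sequence $\{n_k\}\subset\N$ with $(1-\lambda_{n_k})\norm{x_{n_k}-y_{n_k}}\geq\epsilon$ for all $k$. Using $1-\lambda_{n_k}\leq 1$ we get $\norm{x_{n_k}-y_{n_k}}\geq\epsilon$, and using $\norm{x_{n_k}-y_{n_k}}\leq 2r$ we get $1-\lambda_{n_k}\geq\epsilon/(2r)$; since $g$ is strictly increasing with $g(0)=0$, this gives
\[
 (1-\lambda_{n_k})\,g(\norm{x_{n_k}-y_{n_k}})\geq\frac{\epsilon}{2r}\,g(\epsilon)>0
\]
for all $k$, contradicting $(1-\lambda_n)\,g(\norm{x_n-y_n})\to 0$. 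Hence $(1-\lambda_n)\norm{x_n-y_n}\to 0$, i.e.\ $(1-\lambda_n)(x_n-y_n)\to 0$. The only step that is not routine bookkeeping is the geometric inequality in the first paragraph, which is exactly where uniform convexity of $E$ enters (through the modulus of convexity, established first for midpoints and then extended to arbitrary convex combinations on bounded sets); everything after that is elementary manipulation of limits and subsequences.
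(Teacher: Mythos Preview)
The paper does not supply a proof of this lemma; it is quoted verbatim from \cite{MR2529497}*{Lemma~2.1}. Your argument is correct and is essentially the standard proof: the Xu-type inequality $\norm{\lambda x+(1-\lambda)y}^2\le\lambda\norm x^2+(1-\lambda)\norm y^2-\lambda(1-\lambda)g(\norm{x-y})$ on bounded sets is the well-known quantitative form of uniform convexity, and your subsequence extraction in the last step cleanly handles the possibility that $1-\lambda_n\to 0$ along some subsequence.
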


The following is a strong convergence theorem for a relatively
nonexpansive mapping; see~\cites{MR2058234,MR2142300} for 
other convergence theorems and see also~\cite{MR2529497}. 

\begin{theorem}[\cite{Nilsrakoo-Saejung}*{Theorem 3.4}]\label{th:fpp}
 Let $E$ be a uniformly convex and uniformly smooth Banach space, 
 $C$ a nonempty closed convex subset of $E$, 
 $T \colon C \to E$ a relatively nonexpansive mapping, 
 $\{\alpha_n \}$ a sequence in $(0,1]$, and
 $\{\beta_n \}$ a sequence in $[0,1]$. 
 Suppose that $\alpha_n \to 0$, $\sum_{n=1}^\infty \alpha_n = \infty $, 
 and $0 < \liminf_{n\to \infty} \beta_n 
 \leq \limsup_{n\to \infty}\beta_n <1$. 
 Let $u$ be a point in $E$ and $\{x_n\}$ a sequence defined by 
 $x_1 \in C$ and 
 \begin{equation}\label{def:x_n3}
  x_{n+1} = Q_C J^{-1} \Bigl(\alpha_n J u + (1 - \alpha_n)
   \bigl( \beta_n J x_n + (1-\beta_n)JT x_n \bigl) \Bigr)
 \end{equation}
 for $n\in \N$. 
 Then $\{ x_n \}$ converges strongly to $Q_{F(T)}(u)$.
\end{theorem}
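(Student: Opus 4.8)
The plan is to derive Theorem~\ref{th:fpp} from Theorem~\ref{th:tauSRNS} by a suitable choice of the mappings $S_n$. Namely, define $S_n\colon C\to E$ by
\[
 S_n x = J^{-1}\bigl(\beta_n Jx + (1-\beta_n)JTx\bigr)
\]
for $x\in C$ and $n\in\N$. Then $JS_n x_n = \beta_n Jx_n + (1-\beta_n)JTx_n$, so the recursion~\eqref{def:x_n3} becomes precisely $x_{n+1} = Q_C J^{-1}(\alpha_n Ju + (1-\alpha_n)JS_n x_n)$, which is~\eqref{eq:def1}. Since $E$ is smooth and uniformly convex and the conditions on $\{\alpha_n\}$ are among the hypotheses, it then suffices to check that $F=\bigcap_{n=1}^\infty F(S_n)$ is nonempty (and, to identify the limit, equals $F(T)$), that $\{S_n\}$ is a strongly relatively nonexpansive sequence, and that $\{S_n\}$ satisfies the condition~(Z).

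The elementary facts come first. If $Tx=x$ then $S_n x = J^{-1}Jx = x$, so $F(T)\subseteq F(S_n)$ for every $n$; conversely, whenever $\beta_n<1$, $S_n x = x$ forces $(1-\beta_n)(Jx-JTx)=0$, hence $Jx=JTx$ and $x=Tx$. As $\limsup_n\beta_n<1$, we have $\beta_n<1$ for all large $n$, so $F=\bigcap_n F(S_n)=F(T)$, which is nonempty because $T$, being relatively nonexpansive, is of type~(r). Each $S_n$ is of type~(r): for $p\in F(T)$, \eqref{eq:J-convex} and the fact that $T$ is of type~(r) give $\phi(p,S_n x)\le\beta_n\phi(p,x)+(1-\beta_n)\phi(p,Tx)\le\phi(p,x)$.

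The main point is that $\{S_n\}$ is a strongly relatively nonexpansive sequence. Let $\{x_n\}$ be bounded with $\phi(p,x_n)-\phi(p,S_n x_n)\to 0$ for some $p\in F(T)$. Since $\phi(p,Tx_n)\le\phi(p,x_n)$, \eqref{eqn:phi-lower} shows that $\{Tx_n\}$, and hence $\{Jx_n\}$ and $\{JTx_n\}$, are bounded. Expanding the definitions of $\phi$ and $S_n$ gives the identity
\[
 \beta_n\phi(p,x_n)+(1-\beta_n)\phi(p,Tx_n)-\phi(p,S_n x_n)
 = \beta_n\norm{Jx_n}^2+(1-\beta_n)\norm{JTx_n}^2-\norm{\beta_n Jx_n+(1-\beta_n)JTx_n}^2 .
\]
The left-hand side equals $\bigl(\phi(p,x_n)-\phi(p,S_n x_n)\bigr)-(1-\beta_n)\bigl(\phi(p,x_n)-\phi(p,Tx_n)\bigr)$, which lies between $0$ and $\phi(p,x_n)-\phi(p,S_n x_n)$ because both parenthesized differences are nonnegative; hence both sides tend to $0$. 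Applying Lemma~\ref{lemma:uc-ft} in $E^*$, which is uniformly convex since $E$ is uniformly smooth, to $\{Jx_n\}$, $\{JTx_n\}$ and $\{\beta_n\}$ (with $\liminf_n\beta_n>0$), we obtain $(1-\beta_n)(Jx_n-JTx_n)\to 0$, i.e. $JS_n x_n - Jx_n\to 0$; since $\{Jx_n\}$ and $\{JS_n x_n\}$ are bounded, \eqref{eq:equiv_x_n-y_n} yields $S_n x_n - x_n\to 0$ and $\phi(S_n x_n,x_n)\to 0$, as required.

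Finally I would verify the condition~(Z). Let $\{z_n\}$ be a bounded sequence in $C$ with $S_n z_n - z_n\to 0$. By \eqref{eq:equiv_x_n-y_n}, $JS_n z_n - Jz_n = (1-\beta_n)(JTz_n-Jz_n)\to 0$, and since $\limsup_n\beta_n<1$ makes $1-\beta_n$ bounded away from $0$, this forces $JTz_n-Jz_n\to 0$; then $\{JTz_n\}$ is bounded and \eqref{eq:equiv_x_n-y_n} gives $Tz_n-z_n\to 0$. If $z$ is a weak cluster point of $\{z_n\}$, say $z_{n_i}\rightharpoonup z$, then $z\in C$ and $z_{n_i}-Tz_{n_i}\to 0$, so $z\in\hat{F}(T)=F(T)=F$ by relative nonexpansiveness of $T$. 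All hypotheses of Theorem~\ref{th:tauSRNS} are met, so $\{x_n\}$ converges strongly to $Q_F(u)=Q_{F(T)}(u)$. The hard part will be the strongly-relatively-nonexpansive verification: one must extract the convexity identity linking the $\phi$-quantities in $E$ with squared-norm quantities in $E^*$ and then run Lemma~\ref{lemma:uc-ft} in the dual, using the dual's uniform smoothness (equivalently, uniform convexity of $E$) to pass back from $JS_n x_n - Jx_n\to 0$ to $\phi(S_n x_n,x_n)\to 0$.
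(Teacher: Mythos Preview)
Your proposal is correct and follows essentially the same route as the paper's proof: define $S_n = J^{-1}\bigl(\beta_n J + (1-\beta_n)JT\bigr)$, check that each $S_n$ is of type~(r) with $\bigcap_n F(S_n)=F(T)$, establish that $\{S_n\}$ is a strongly relatively nonexpansive sequence via the convexity identity together with Lemma~\ref{lemma:uc-ft} applied in the uniformly convex dual $E^*$, verify condition~(Z) from $\hat F(T)=F(T)$, and then invoke Theorem~\ref{th:tauSRNS}. The only difference is cosmetic: the paper cites \cite{AKT1} for the type~(r) and common-fixed-point claims, whereas you verify them directly.
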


\begin{proof}
 Set $S_n = J^{-1}\bigl( \beta_n J + (1-\beta_n)JT \bigr)$ 
 for $n\in \N$. Then it is easy to check that each $S_n$ is a mapping of
 type~(r) and $\bigcap_{n=1}^\infty F(S_n) =F(T)$; 
 see~\cite{AKT1}*{Corollary 3.8}. 
 Moreover, it is clear that 
 $\eqref{def:x_n3}$ coincides with $\eqref{eq:def1}$. 
 To finish the proof, it is enough to show that $\{S_n\}$ is a
 strongly relatively nonexpansive sequence 
 and $\{S_n\}$ satisfies the condition~(Z). 

 Let $\{y_n\}$ be a bounded sequence in $C$ such that
 $\phi(p, y_n) - \phi(p, S_{n} y_n) \to 0$ 
 for some $p\in \bigcap_{n=1}^\infty F(S_{n})$. 
 Since $T$ is of type~(r), we have
 \begin{align*}
  &\beta_{n} \norm{Jy_n}^2 + (1-\beta_{n})
  \norm{JT y_n}^2 - \norm{J S_{n} y_n}^2\\
  &\qquad = \beta_{n} \phi(p, y_n) + (1-\beta_{n}) 
  \phi(p, T y_n) - \phi(p, S_{n}y_n)\\
  &\qquad \leq 
  \beta_{n} \phi(p, y_n) + (1-\beta_{n}) \phi(p,y_n) 
  - \phi(p, S_{n} y_n)\\
  &\qquad = 
  \phi(p, y_n) -  \phi(p, S_{n} y_n) \to 0. 
 \end{align*} 
 Using Lemma~\ref{lemma:uc-ft} and~\eqref{eq:equiv_x_n-y_n}, 
 it turns out that 
 \begin{equation*}
  Jy_n - JS_{n} y_n = (1-\beta_{n} )(J y_n  - JT y_n )\to 0,
 \end{equation*}
 and hence $\phi(S_{n}y_n,y_n)\to 0$. 
 Thus $\{S_{n}\}$ is a strongly relatively nonexpansive sequence. 

 Let $\{z_n\}$ be a bounded sequence in $C$ such that 
 $z_n - S_{n} z_n \to 0$. 
 Then it follows from~\eqref{eq:equiv_x_n-y_n} that
 \[
 (1-\beta_{n})(J z_n - JT z_n ) = Jz_n - JS_{n} z_n \to 0, 
 \]
 so we conclude that $z_n - T z_n \to 0$ 
 by $\limsup_{n\to \infty}\beta_{n} <1$ and \eqref{eq:equiv_x_n-y_n}. 
 Since $T$ is relatively nonexpansive, 
 every weak cluster point of $\{z_n \}$ belongs to $F(T)$. 
 This means that $\{S_n\}$ satisfies the condition~(Z). 
 Consequently, Theorem~\ref{th:tauSRNS} implies the conclusion. 
\end{proof}

\begin{remark}
 In \cite{Nilsrakoo-Saejung}*{Theorem 3.4}, 
 $\{\alpha_n\}$ and $\{\beta_n\}$ are assumed to be sequences in
 $(0,1)$. 
\end{remark}

\begin{bibdiv}
 \begin{biblist}
\bib{MR1386667}{article}{
   author={Alber, Y. I.},
   title={Metric and generalized projection operators in Banach spaces:
   properties and applications},
   conference={
      title={Theory and applications of nonlinear operators of accretive and
      monotone type},
   },
   book={
      series={Lecture Notes in Pure and Appl. Math.},
      volume={178},
      publisher={Dekker},
      place={New York},
   },
   date={1996},
   pages={15--50},
}

\bib{MR2338104}{article}{
   author={Aoyama, Koji},
   author={Kimura, Yasunori},
   author={Takahashi, Wataru},
   author={Toyoda, Masashi},
   title={Approximation of common fixed points of a countable family of
   nonexpansive mappings in a Banach space},
   journal={Nonlinear Anal.},
   volume={67},
   date={2007},
   pages={2350--2360},
}

\bib{MR2529497}{article}{
   author={Aoyama, Koji},
   author={Kohsaka, Fumiaki},
   author={Takahashi, Wataru},
   title={Strongly relatively nonexpansive sequences in Banach spaces and
   applications},
   journal={J. Fixed Point Theory Appl.},
   volume={5},
   date={2009},
   pages={201--224},
}

\bib{TMJ}{article}{
   author={Aoyama, Koji},
   author={Kohsaka, Fumiaki},
   author={Takahashi, Wataru},
   title={Proximal point methods for monotone operators in
	   Banach spaces},
 journal={Taiwanese Journal of Mathematics}, 
   volume={15},
   date={2011},
   pages={259--281},
}

\bib{AKT1}{article}{
  author={Aoyama, Koji},
  author={Kohsaka, Fumiaki},
  author={Takahashi, Wataru},
  title={Strong convergence theorems by shrinking and hybrid projection 
  methods for relatively nonexpansive mappings in Banach spaces},
  conference={
  title={Nonlinear analysis and convex analysis},
  },
  book={
  publisher={Yokohama Publ., Yokohama},
  },
  date={2009},
  pages={7--26},
}

\bib{MR1402641}{article}{
   author={Censor, Y.},
   author={Reich, S.},
   title={Iterations of paracontractions and firmly nonexpansive operators
   with applications to feasibility and optimization},
   journal={Optimization},
   volume={37},
   date={1996},
   pages={323--339},
}

\bib{MR2112848}{article}{
   author={Kamimura, Shoji},
   author={Kohsaka, Fumiaki},
   author={Takahashi, Wataru},
   title={Weak and strong convergence theorems for maximal monotone
   operators in a Banach space},
   journal={Set-Valued Anal.},
   volume={12},
   date={2004},
   pages={417--429},
}

\bib{MR1972223}{article}{
   author={Kamimura, Shoji},
   author={Takahashi, Wataru},
   title={Strong convergence of a proximal-type algorithm in a Banach space},
   journal={SIAM J. Optim.},
   volume={13},
   date={2002},
   pages={938--945 (electronic) (2003)},
}

\bib{MR2058504}{article}{
   author={Kohsaka, Fumiaki},
   author={Takahashi, Wataru},
   title={Strong convergence of an iterative sequence for maximal monotone
   operators in a Banach space},
   journal={Abstr. Appl. Anal.},
   date={2004},
   pages={239--249},
}

\bib{MR2466027}{article}{
   author={Maing{\'e}, Paul-Emile},
   title={Strong convergence of projected subgradient methods for nonsmooth
   and nonstrictly convex minimization},
   journal={Set-Valued Anal.},
   volume={16},
   date={2008},
   pages={899--912},
}

\bib{MR2058234}{article}{
   author={Matsushita, S.},
   author={Takahashi, Wataru},
   title={Weak and strong convergence theorems for relatively nonexpansive
   mappings in Banach spaces},
   journal={Fixed Point Theory Appl.},
   date={2004},
   pages={37--47},
}

\bib{MR2142300}{article}{
   author={Matsushita, S.},
   author={Takahashi, Wataru},
   title={A strong convergence theorem for relatively nonexpansive mappings
   in a Banach space},
   journal={J. Approx. Theory},
   volume={134},
   date={2005},
   pages={257--266},
}

\bib{Nilsrakoo-Saejung}{article}{
   author={Nilsrakoo, Weerayuth},
   author={Saejung, Satit},
   title={Strong convergence theorems by Halpern-Mann iterations for
   relatively nonexpansive mappings in Banach spaces},
   journal={Applied Mathematics and Computation},
   volume={217},
   date={2011},
   pages={6577--6586},
}

\bib{MR1386686}{article}{
   author={Reich, Simeon},
   title={A weak convergence theorem for the alternating method with Bregman
   distances},
   conference={
      title={Theory and applications of nonlinear operators of accretive and
      monotone type},
   },
   book={
      series={Lecture Notes in Pure and Appl. Math.},
      volume={178},
      publisher={Dekker},
      place={New York},
   },
   date={1996},
   pages={313--318},
}

\bib{MR0282272}{article}{
   author={Rockafellar, R. T.},
   title={On the maximality of sums of nonlinear monotone operators},
   journal={Trans. Amer. Math. Soc.},
   volume={149},
   date={1970},
   pages={75--88},
}

\bib{MR2680036}{article}{
   author={Saejung, Satit},
   title={Halpern's iteration in Banach spaces},
   journal={Nonlinear Anal.},
   volume={73},
   date={2010},
   pages={3431--3439},
}

\bib{MR1864294}{book}{
   author={Takahashi, Wataru},
   title={Nonlinear functional analysis},
   publisher={Yokohama Publishers, Yokohama},
   date={2000},
   pages={iv+276},
}

\bib{MR1911872}{article}{
   author={Xu, Hong-Kun},
   title={Iterative algorithms for nonlinear operators},
   journal={J. London Math. Soc. (2)},
   volume={66},
   date={2002},
   pages={240--256},
}
\end{biblist}
\end{bibdiv}

\end{document}